\begin{document}
\newcommand{\cblue}{\color{blue}}
\newcommand{\cgreen}{\color{green}}
\newcommand{\cbred}{\bf\color{red}}
\newcommand{\cred}{\color{red}}

\newcommand{\bm}{\bibitem}
\newcommand{\no}{\noindent}

\newcommand{\bea}{\begin{eqnarray}}
\newcommand{\bes}{\begin{subequations}}
\newcommand{\ees}{\end{subequations}}
\newcommand{\bgt}{\begin{gather}}

\newcommand{\egt}{\begin{gather}}
\newcommand{\eea}{\end{eqnarray}}
\newcommand{\beaa}{\begin{eqnarray*}}
\newcommand{\eeaa}{\end{eqnarray*}}

\renewcommand{\baselinestretch}{1.1}

%%%% mathbb%%%%%%%%%%
\def \D{\mathbb{D}}
\def \E{\mathbb{E}}
\def \F{\mathbb{F}}
\def \L{\mathbb{L}}
\def \P{\mathbb{P}}
\def \Q{\mathbb{Q}}
\def \R{\mathbb{R}}
\def \S{\mathbb{S}}
\def \M{\mathbb{M}}
\def \N{\mathbb{N}}
\def \C{\mathbb{C}}

%%% cali %%%%
\def\Ac{{\cal A}}
\def\Bc{{\cal B}}
\def\Cc{{\cal C}}
\def\Dc{{\cal D}}
\def\Ec{{\cal E}}
\def\Fc{{\cal F}}
\def\Gc{{\cal G}}
\def\Hc{{\cal H}}
\def\Ic{{\cal I}}
\def\Jc{{\cal J}}
\def\Kc{{\cal K}}
\def\Lc{{\cal L}}
\def\Mc{{\cal M}}
\def\Nc{{\cal N}}
\def\Oc{{\cal O}}
\def\Pc{{\cal P}}
\def\Qc{{\cal Q}}
\def\Rc{{\cal R}}
\def\Sc{{\cal S}}
\def\Tc{{\cal T}}
\def\Uc{{\cal U}}
\def\Vc{{\cal V}}
\def\Wc{{\cal W}}
\def\Xc{{\cal X}}
\def\Yc{{\cal Y}}
\def\Zc{{\cal Z}}

\def\Lb{\overline L}
\def\Pb{\overline{\P}}
\def\Qb{{\bar \Q}}
\def\Rb{{\bar \R}}
\def\Rcb{{\bar{\Rc}}}
\def\Db{\bar{D}}
\def\Dbb{\bar{\D}}
\def\Dcb{\bar{\Dc}}
\def\Cb{\overline{C}}
\def\Cu{\underline{C}}
\def\Ob{\overline{O}}
\def\Ocb{\overline{\Oc}}
\def\Bh{\widehat{B}}
\def\Lcb{\overline{\Lc}}

\def\Rct{\tilde{\Rc}}
\def\Et{\tilde{\E}}
\def\Eb{\overline{\E}}

\def\Nt{\tilde{N}}
\def\Bt{\tilde{B}}
\def\Kct{\widetilde{\Kc}}
\def\Omt{\tilde{\Om}}
\def\Fct{\tilde{\Fc}}
\def\Pt{\tilde{\P}}
\def\pt{\widetilde{p}}
\def\Yt{\tilde{Y}}
\def\Zt{\tilde{Z}}

\def\Ecu{\overline{\Ec}}
\def\Ecb{\underline{\Ec}}
\def\Acu{\overline{\Ac}}
\def\Acb{\underline{\Ac}}
\def\Ucu{\overline{\Uc}}
\def\Ucb{\underline{\Uc}}
\def\Lch{\widehat{\Lc}}

\def\taub{\bar \tau}

\def\Thetau{\bar{\Theta}}
\def\psiu{\overline{\psi}}
\def\psib{\underline{\psi}}

\def\Mbf{{\bf M}}
\def \I{{\bf I}}
%%% greek %%%%
\def \au{\overline{\alpha}}
\def \al{\underline{\alpha}}
\def \sigmal{\underline{\sigma}}
\def \a{\alpha}

\def \Om{\Omega}
\def \om{\omega}
\def \Omb{\overline{\Omega}}
\def \omb{\overline{\om}}
\def \omh{\hat{\om}}
\def \tauh{\hat{\tau}}
\def \omt{\tilde{\om}}
\def \eps{\epsilon}
\def \xb{\mathbf{x}}
\def \xbh{\hat{\xb}}
\def \0{\mathbf{0}}
\def \H{\mathbb{H}}
\def \Xb{\overline{X}}

\def \Lambdab{\overline{\Lambda}}
\def \ab{\bar{a}}

\def \Sg{\Sigma}
\def \Fcb{\overline{{\cal F}}}
\def \Fbb{\overline{\mathbb{F}}}

\def \Pcb{\overline{\Pc}}
\def \Kcb{\overline{\Kc}}
\def \psih{\widehat{\psi}}

\def \vp{\varphi}
\def \x{\times}
\def \sigmah{\widehat \sigma}
\def \yr{\mathrm{y}}

\def \Omh{\widehat \Om}
\def \Fch{\widehat \Fc}
\def \Fh{\widehat \F}
\def \Ph{\widehat \P}
\def \Xh{\widehat X}
\def \Wh{\widehat W}
\def \T{\mathbb{T}}
\def \Z{\mathbb{Z}}
\def \ph{\widehat p}
\def \Vh{\widehat V}
\def \muh{\widehat \mu}

\newcommand{\var}{{\rm Var}}
%=================================
\newcommand{\rmi}{{\rm (i)$\>\>$}}

\newcommand{\rmii}{{\rm (ii)$\>\>$}}
\newcommand{\rmiii}{{\rm (iii)$\>\,    \,$}}
\newcommand{\rmiv}{{\rm (iv)$\>\>$}}
\newcommand{\rmv}{{\rm (v)$\>\>$}}

\newcommand{\rma}{{\rm a)$\>\>$}}
\newcommand{\rmb}{{\rm b)$\>\>$}}
\newcommand{\rmc}{{\rm c)$\>\>$}}
\newcommand{\rmd}{{\rm d)$\>\>$}}

\def\1{\mathbf{1}}
\def\xr{{\rm x}}
\def\vr{{\rm v}} 
\def\ti{t_{i}^{n}}
\def\tip{t_{i+1}^{n}}
\def\bru#1{{\color{red}{#1}}}
\def\dr{{\mathfrak d}}
\def\dr{{d}}
\def\CT{C([0,T])}
\def\CTt{\widetilde C([0,T])}
\def\DT{D([0,T])}     
\def\TD{[0,T]\x\DT}
\def\TC{[0,T]\x\CT}
\def\v#1{_{\wedge #1}}
 \def\Ninfty#1{\|#1\|}    
 \def\Cb{{\mathbb C}}      
 \def\Href#1{{\rm (H\ref{#1})}}
  \def\HHref#1{{\rm (H\ref{#1}$^{*}$)}}
  \def\wr{{\rm w}}
  \def\vs#1{\vspace{2mm}}
  \def\Lb{{\mathbf L}}
  \def\Pas{\P-{\rm a.s.}}
  \def\bru#1{{\color{red} #1}}
    \def\brub#1{{\color{purple} #1}}
\def\greg#1{{\color{red}#1}}
\def\red#1{{\color{red}#1}}
\def\blue#1{{\color{blue}#1}}

\title{Itô-Dupire's formula for $\C^{0,1}$-functionals of càdlàg weak Dirichlet processes}
\author{Bruno Bouchard\footnote{CEREMADE, Universit\'e Paris-Dauphine, PSL, CNRS.  bouchard@ceremade.dauphine.fr. } 
, Maximilien Vallet\footnote{CEREMADE, Universit\'e Paris-Dauphine, PSL, CNRS.  vallet@ceremade.dauphine.fr. } 
}
\maketitle 
$\newtheorem{Theorem}{Theorem}[section]$
$\newtheorem{Lemma}[Theorem]{Lemma}$
$\newtheorem{Proposition}[Theorem]{Proposition}$
$\newtheorem{Remark}[Theorem]{Remark}$
$\newtheorem{Definition}[Theorem]{Definition}$
$\newtheorem*{Assumption}{Assumption}$

\begin{abstract} 
	We extend to   c\`adl\`ag weak Dirichlet processes the $\mathbb{C}^{0,1}$-functional It\^{o}-Dupire's formula of Bouchard, Loeper and Tan (2021). In particular, we provide sufficient conditions under which a $\mathbb{C}^{0,1}$-functional transformation of a special weak Dirichlet process remains a special weak Dirichlet process. As opposed to  Bandini and Russo (2018) who considered the Markovian setting, our approach is not based on the approximation of the functional by smooth ones, which turns out not to be available in the path-dependent case. We simply use a small-jumps cutting argument.  
\end{abstract}

\section{Introduction}
Let $X=X_0+M+A$ be a  càdlàg  semimartingale where $M=M^c+M^d$ is a local martingale and $A$ is adapted and of bounded variations. Let $\mu^{X}$ denotes its jump measure and $\nu^{X}$ its compensator. Then, given a $C^{1,2}$ function $F:[0,T]\x\mathbb{R}^d\to\mathbb{R}$, the  Itô's formula ensures that $(F(t,X_t))_{t\in\left[ 0,T\right] }$ is a  semimartingale with decomposition
\begin{align*} 
F(t,X_t)=F(0,X_0)+&\int_0^t\nabla_x F(s,X_{s-}) dM_s\notag\\
&+\int_{\left] 0,t\right] \times\mathbb{R}^d}(F(s,X_{s-}+x)-F(s,X_{s-})-x\cdot \nabla_x F(s,X_{s-}))(\mu^X-\nu^{X})(ds,dx)\\
&+\Gamma^F_t    
\end{align*}
where  
\begin{align*}
\Gamma^F_t&=\int_0^t\partial_tF(s,X_{s}) ds+\int_0^t\nabla_x F(s,X_{s-}) dA_s+\dfrac{1}{2}\sum_{1\leq i,j\leq d}\int_0^t\nabla^2_{x^{i}x^{j}}F(s,X_{s-}) d\left[ X^i,X^j\right]^c_s.
\end{align*}
If we assume furthermore that $F(\cdot,X_{\cdot})$ is a local martingale, then  $\Gamma^{F}\equiv 0$, and 
this formula only uses the first derivative in space of $F$ and should be valid even if $F$ is only  ${C}^{0,1}$. In the Markovian setting, we know from \cite{bandini2017weak,coquet2006natural} that it is indeed true for càdlàg weak Dirichlet processes, even when $F(\cdot,X_{\cdot})$ is not a local martingale, in which case $\Gamma^{F}$ turns out to be an orthogonal process, which is even predictable if $X$ is special. In \cite{bouchard2021ac}, the authors provide an extension of this result to the path-dependent case under the condition that $X$ has continuous path. Naturally, it uses  the notion of Dupire's derivative, see \cite{Dupire2009FunctionalIC,cont2013functional}. 

Such a decomposition appears to be a powerful tool in particular for using verification arguments in optimal control problems, for which obtaining a $C^{1,2}$-type regularity for the value function may be difficult, if even true. The situation is worse when it comes to considering path-dependent problems, for which classical derivatives have to be replaced by the notion of Dupire's derivatives, whose existence and regularity are difficult to obtain.   Versions of the above formula were actually already applied successfully in   \cite{BT19,bouchard2021quasi,bouchard2021ac} in the context of risk hedging, under model uncertainty or in markets with price impacts. See    \cite{bouchard2021approximate} for an application to  BSDEs, or for a class of so-called $\pi$-approximate viscosity solutions of fully non-linear parabolic path-dependent PDEs, for which $C^{0,1}$-regularity in the sense of Dupire can be obtained. 
   
When, as in \cite{bouchard2021ac}, $X$ has continuous path, then it is immediate to conclude that $\Gamma^{F}$ is predictable. Things are a priori more complex if $X$ has jumps. In this case, the above decomposition into a weak Dirichlet process is not unique, and the orthogonal process $\Gamma^{F}$ can contain a purely discontinuous martingale part, which makes the above decomposition useless for verification arguments. In the Markovian setting,  \cite{bandini2017weak} uses an approximation argument on $F$ to show that it can actually be chosen to be predictable if $X$ is special. Namely, they construct a sequence of predictable processes $(\Gamma^{F_{n}})_{n\ge 1}$ obtained by applying It\^{o}'s formula to  smooth approximations $(F_{n})_{n\ge 1}$ of $F$, and then show that $(\Gamma^{F_{n}})_{n\ge 1}$ converges to $\Gamma^{F}$. 
This argument could not be extended so far to the case where $F$ is path-dependent. The main reason is that the vertical and horizontal Dupire's derivatives do not commute, which renders the construction of smooth (in the sense of Dupire) approximations a completely open problem, see e.g.~\cite{saporito2018functional}. 

In this paper, we follow a different and actually simpler route. First, we observe that the decomposition can easily be deduced from   \cite{bouchard2021ac} when $X$ does not have small jumps. Then, we just approximate $X$ by removing its small jumps, and passing to the limit. 

%   This work is based on the theory of Itô calulcus via regularization introduced in \cite{russo1993forward, russo1995generalized, russo2007elements}. Those papers define the notion of integral forward, quadratic covariation and weak Dirichlet process which extend respectively the notions of stochastic integral, angle bracket and semimartingale. Furthermore,  we will work in the non-markovian setting and then use the theory of Functionnal Itô calculus developped by Dupire in \cite{Dupire2009FunctionalIC}. In this work, the author defines the good notions of derivative for path-dependent functionnals and offer a version of the Itô formula for $\C ^{1,2}$ functions.  However,  $\C ^{1,2}$ regularity in the sense of Dupire (see below for a rigourous definition) is really difficult to obtain in practice and is usually false for the functionnals we have in mind. We belive that this work will be usefull for the resolution of a large variety of control problems and headging strategies in mathematical finance. It was shown in \cite{bouchard2021approximate,bouchard2021ac} that under suitable assumptions, the $\pi$-approximate viscosity solutions of a large variety of path-dependent equations has $\C^{0,1}$ regularity.
The rest of the paper is organized as follows. We first recall  usefull results of the functionnal Itô calculus and Itô calculus via regularization. Then, we state and demonstrate our version of the functionnal Itô's formula for càdlàg special weak Dirichlet processes. We conclude with a typical exemple of application. Some (essentially known) technical results are collected in the Appendix for completeness.

%%%%%%%%%%%%
\section{Notations and definitions}

All over  this paper, we fix a time horizon  $T>0$ and let $(\Omega,\mathcal{F},(\mathcal{F}_t)_{t\in \left[ 0,T\right] },\P)$ be a stochastic basis, $i.e.$ a filtered probability space such that the filtration $(\mathcal{F}_t)_{t\in \left[ 0,T\right] }$ is right continuous.

%%%%
\subsection{Skorokhod space and path-dependent functionnals}
Let $D(\left[ 0,T\right] )$ be the set of càdlàg paths on $\left[  0,T\right] $ taking values in $ \mathbb{R}^d$ and $\Theta :=\left[  0,T\right]\times D(\left[ 0,T\right] )$. 
For $(t,\xr)\in\Theta $, we define the (optional-) stopped path $\xr_{t\wedge}\in D(\left[ 0,T\right] )$ by $\xr_{t\wedge}:=\xr\mathbb{1}_{ \left[ 0,t\right[  }+\xr_t\mathbb{1}_{\left[ t,T\right]  }$ and its predictable version $\xr^{-}_{t\wedge}\in D(\left[ 0,T\right] )$ by $\xr^{-}_{t\wedge}:=\xr\mathbb{1}_{ \left[ 0,t\right[  }+\xr_{t-}\mathbb{1}_{\left[ t,T\right]  }$.  
For $(t,\xr)\in\Theta $ and $y\in\mathbb{R}^d$, we also define the trajectory $\xr\oplus_ty$ by $\xr \mathbb{1}_{\left[ 0,t\right[  }+(\xr_t+y)\mathbb{1}_{ \left[ t,T\right] }$ and the trajectory  $\xr\boxplus_ty$ by $\xr \mathbb{1}_{ \left[ 0,t\right[  }+y\mathbb{1}_{\left[ t,T\right]  }$. 

We define on $\Theta$ the pseudo-distance   $d_\Theta((t,\xr),(t',\xr'))=\lvert t'-t \rvert + \lvert\!\lvert \xr '_{t'\wedge}-\xr_{t\wedge}\rvert\!\rvert$, where $\vert\!\lvert\cdot\rvert\!\rvert$ denotes the uniform norm on $D(\left[ 0,T\right] )$. Considering the quotient space $(\Theta,\sim)$  defined by $(t,\xr)\sim(t',\xr')$ whenever $t=t'$ and $\xr_{t\wedge }=\xr'_{t\wedge }$,    $(\Theta,d_\Theta)$ is a complete metric space.  

We say that $F:\Theta\rightarrow\mathbb{R}$ is non-anticipative if $F(t,\xr)=F(t,\xr_{t\wedge})$    $\forall(t,\xr)\in\Theta $. 
A non anticipative function $F:\Theta\rightarrow\mathbb{R}$ is said continuous if it is continuous for $(\Theta,d_\Theta)$. The set of continuous non-anticipative maps on $\Theta$ will be denoted $\mathbb{C}(\Theta)$.
We say that $F$ is locally uniformly continuous if, for all $K>0$, there exists a modulus of continuity $\delta_K(F,\cdot)$ ($i.e.$ a non-negative and non-decreasing function defined on $\mathbb{R}_+$ that is continuous at $0$ and vanishes at $0$) such that 
\begin{equation}\label{def_module_continuit}
\lvert F(t,\xr)-F(t,\xr')\rvert\leq\delta_K(F,d_\Theta((t,\xr),(t',\xr')))
\end{equation}
 for all $(t,\xr),(t',\xr')\in\Theta$ with $\| \xr\|\vee\| \xr'\|\leq K$.\\
A functionnal $F:\Theta\rightarrow\mathbb{R}$ is said to be locally bounded if 
\begin{equation*}
\sup_{t\in\left[ 0,T\right],~\lvert\!\lvert{\xr}\rvert\!\rvert\leq{K}}\lvert F(t,\xr)\rvert <+\infty,~~\forall{K}\in\R_+~.
\end{equation*}\\
We denote by $\C_{loc}^{u,b}(\Theta)$ the set of non anticipative,  locally uniformly continuous and locally bounded functionnals.

We can now define the notion of differentiability for path-dependent functionals following the one introduced by Dupire in \cite{Dupire2009FunctionalIC}:  a non-anticipative function $F:\Theta\rightarrow\mathbb{R}$ is said to be vertically differentiable at $(t,\xr)\in\Theta $ if $y\in\R^d\mapsto F(t,\xr\oplus_ty)$ is differentiable at 0.  In this case,  we denote by $\nabla_\xr F(t,\xr)$  this differential. We denote by  $\mathbb{C}^{0,1}(\Theta)$ the collection of non-anticipative functions $F$ such that $\nabla_\xr F$ is well-defined and continuous on $\Theta$.

In this paper, for all path-dependent functionnal $F$ defined on $\Theta$ and $(t,\xr)\in\Theta$, we will use the notations 
$$
F_t(\xr):=F(t,\xr)\;\mbox{ and } \; F_{t}(\xr^{-}):=F_{t}(\xr^{-}_{t\wedge}).
$$
\subsubsection{It\^o's calculus via regularization and weak Dirichlet processes}

	Let us recall here some definitions and facts on the It\^o calculus via regularization developped by Russo and Vallois \cite{russo1993forward, russo1995generalized, russo2007elements}. 
	See also Bandini and Russo \cite{bandini2017weak} for the case of c\`adl\`ag processes.  For the rest of the paper u.c.p. means uniform convergence in probability.

	\begin{Definition} \label{def:integral}
		$\mathrm{(i)}$ Let $X$ be a real valued {c\`adl\`ag} process, and $H$ be a process with paths in $L^1([0,T])$ a.s. The forward integral of $H$ w.r.t. $X$ is defined by
		$$
			\int_0^t H_s ~d^-X_s 
			~:=~ 
			\lim_{\eps \searrow 0} \frac1\eps \int_0^t H_s \big( X_{(s+\eps) \wedge t} - X_s \big) ds,
			~~t \ge 0,
		$$
		whenever the limit exists in the sense of u.c.p.\\
		We naturally extend the definition of the forward integral for two $\R^d$-valued processes $X$ and $H$ such that $X^i$ is càdlàg and $H^i$ has paths in $L^1([0,T])$ for all $i=1\ldots d$ by 
		$$
		    \int_0^t H_s ~d^-X_s=\sum_{i=1}^d\int_0^t H^i_s ~d^-X^i_s	,	    
		~~t \ge 0,
		$$
		whenever all those integrals exist.\\
		\vspace{0.5em}
		
		\noindent $\mathrm{(ii)}$ 
		Let $X$ and $Y$ be two real valued {c\`adl\`ag} processes. The  quadratic {co}variation $[X,Y]$ is defined by 
		$$
			[X,Y]_{t}
			~:=~ 
			\lim_{\eps\searrow 0} \frac1\eps \int_{0}^{t} (X_{(s+\eps)\wedge t}-X_{s})(Y_{(s+\eps)\wedge t}-Y_{s})ds,
			~~t\ge 0,
		$$ 
		whenever the limit exists in the sense  of u.c.p.\\
		In the following, we will use the notation $\left[ X,Y\right] _{\eps,t}^{ucp}:=\frac1\eps \int_{0}^{t} (X_{(s+\eps)\wedge t}-X_{s})(Y_{(s+\eps)\wedge t}-Y_{s})ds$.\\
		We naturally define the quadratic {co}variation matrix $([X,Y]^{i,j})_{1\leq i,j\leq d}$ for two $\R^d$-valued càdlàg processes $X$ and $Y$ by, for all $1\leq i,j\leq d$,
		$$
		    [X,Y]^{i,j}_{t}=[X^i,Y^j]_{t},~~t\ge 0,
		$$ 
		whenever $[X^i,Y^j]$ is well defined for all $1\leq i,j\leq d$.
		\vspace{0.5em}
		
		\noindent $\mathrm{(iii)}$ We say that a $\R^d$-valued {c\`adl\`ag} process $X$ has finite quadratic variation,
		if its quadratic variation, defined by $[X]:=[X,X]$, exists and is finite a.s.
	\end{Definition}

	\begin{Remark}\label{consistence_bracket}
		When $X$ is a {(c\`adl\`ag)} semimartingale and $H$ is a  c\`adl\`ag adapted process,   $\int_0^t H_s ~d^-X_s$ coincides with the usual It\^o's integral $\int_0^t H_{s-} dX_s$.
	 	When $X$ and $Y$ are two semimartingales,   $[X, Y]$ coincides with the usual bracket.
 	\end{Remark}

	\begin{Definition} \label{def:weakDirichlet}
		$\mathrm{(i)}$ We say that an adapted process $A$ is orthogonal if $[A, N] = 0$ for any continuous local martingale $N$.
		
		\vspace{0.5em}
		
		\noindent $\mathrm{(ii)}$ An adapted   process $X$ is called a  (resp.~special) weak Dirichlet process if it has a decomposition of the form 
		 $X = X_0 + M +A$, where $M$ is a local martingale and $A$ is an (resp.~predictable) orthogonal process, such that $M_0 = A_0 = 0$.
		 	\end{Definition}
	
	\begin{Remark}\label{rem: weak dirichlet}
	     $\mathrm{(i)}$ An adapted  process with finite variation is orthogonal.
		Consequently,  a   semimartingale is in particular a weak Dirichlet process.

		\vspace{0.5em}
		
		 \noindent$\mathrm{(ii)}$ Any purely discontinuous local martingale is orthogonal by Remark \ref{consistence_bracket}.

		\vspace{0.5em}
		
		\noindent $\mathrm{(iv)}$ An orthogonal process has not necessarily finite  variations. For example,  any deterministic process (with possibly infinite variation) is orthogonal.

		\vspace{0.5em}
		
		\noindent $\mathrm{(iv)}$	
		The decomposition  $X= X_0 + M+A$ for a càdlàg weak Dirichlet process $X$ is not unique in general. Indeed, we can always displace a purely discontinuous martingale part in the orthogonal part. However, this decomposition is unique if $X$ is   special.  
 	\end{Remark}

%%%%%%%%%%%%%%%%%%%%%%%%%%
\section{The It\^{o}-Dupire's formula for $\mathbb{C}^{0,1}$-functionals}

In \cite{bouchard2021ac}, the authors require an assumption relating the regularity of the path of $X$ and of the functional $F$, Assumption (A) below. When $X$ has continuous path, it turns out be equivalent to the decomposition \eqref{eq: Ito thm} below. In our setting, we shall apply it to an approximation of $X$ obtained by removing its small jumps, see Remark \ref{rem: applique A a Xn} below. 

\begin{Assumption}[A]
Let $F:\Theta\to \R$ be a non-anticipative functional and  $Y$ be a càdlàg process. We say that the couple $(F,Y)$ satisfies Assumption $(A)$ if
\begin{equation}\label{iff_Ito}
\dfrac{1}{\epsilon}\int_0^\cdot(F_{s+\epsilon}(Y)-F_{s+\epsilon}(Y_{s\wedge}\boxplus_{s+\epsilon}Y_{s+\epsilon}))(N_{s+\epsilon}-N_s)ds\underset{\epsilon\rightarrow{0}}{\longrightarrow}0~~u.c.p. 
\end{equation}
for all continuous martingale $N$.
\end{Assumption}
\begin{Remark}\label{suff_Ito} First note that the left-hand side of \eqref{iff_Ito} is always $0$ when $F$ is Markov, i.e.~$F(t,\xr)=F(t,\xr')$ whenever $\xr_{t}=\xr'_{t}$. Second, the above also holds if we assume that, for all $\xr\in \DT$, $s\in [0,T]$ and $\eps \in [0,T-s]$, 
		$$
			\big|F_{s+\eps}(\xr)- F_{s+\eps}(\xr_{s \wedge }\boxplus_{s +\eps} \xr_{s+\eps}) \big|
			~\le~ 
			\int_{(s, s+\eps)} \phi \big(\xr, |\xr_{u-} - \xr_{s }|\big)db_u (\xr),
		$$
		where  $\phi: \DT \x \R_+  \longrightarrow \R$ satisfies $ \sup_{|y|\le K} \phi(\xr,y )  <\infty$, $\lim_{y \searrow 0}  \phi(\xr,y ) = \phi(\xr, 0) =0$ for all $\xr \in \DT$ and $K > 0$,  
		and $b$ maps $\DT$ into the space $ {\rm BV}_{+}$ of non-decreasing bounded variation processes.
	This follows from the same arguments as in \cite[Proposition 2.11]{bouchard2021ac}. In particular, \eqref{iff_Ito} is satisfied  if $F$ is Fr\'echet differentiable in the sense of Clark \cite{clark1970representation}, see \cite[Example 2.12]{bouchard2021ac}.
		\end{Remark}
We are now ready to state our decomposition result. From now on, we fix a càdlàg special weak Dirichlet process 
\begin{equation}\label{eq: def X}
X=X_0+M+A.
\end{equation}
Here, $M=M^{c}+M^{d}$ where $M^{c}$ and $M^{d}$ denote its continuous and purely discontinuous martingale parts, and $A$ is a predictable orthogonal process (recall that the decomposition is unique in this case, see Remark \ref{rem: weak dirichlet}). We denote by $\mu^X$ the jump measure of $X$, and by $\nu^X$ its compensator.  If $\sum_{{0\leq s \leq T}}\lvert \Delta A_s \rvert<+\infty$ a.s., then one can define the continuous part of $X$ by   $$X^c=X-M^d-{\sum_{s\le \cdot}} \Delta A_s.$$

\begin{Theorem}\label{TIto} Let $X$ be as in \eqref{eq: def X} and assume that 
\begin{align}\label{eq: hyp jumps} 
[X]_{T} +\sum_{0\leq s \leq T}\lvert \Delta A_s \rvert<+\infty \;\mbox{ a.s.} 
\end{align}
Let $F\in\mathbb{C}^{0,1}(\Theta)$ be such that $F$ and $\nabla_{\xr}F$ are both in $\mathbb{C}_{loc}^{u,b}(\Theta)$ and such that $t\in [0,T]\mapsto\nabla_\xr F_{t}(X^-)$ admits right-limits $a.s$. Assume further that $(F,Z\oplus_\tau(X^c-X_\tau^c))$ satisfies Assumption $(A)$ for every càdlàg process $Z$ and stopping time $\tau$ such that $\tau\le T$ a.s.\\
Then, $(F_{t}(X))_{t\in{\left[ 0,T \right] }}$ is a special weak Dirichlet process with decomposition
\begin{align}\label{eq: Ito thm}
F_{t}(X)=F_{0}(X)+&\int_0^t\nabla_\xr F_{s}(X^-) dM_s\notag\\
&+\int_{\left] 0,t\right] \times\mathbb{R}^d}(F_s(X^-\oplus_s x)-F_s(X^-)-x\nabla_\xr F_{s}(X^-)) {(\mu^X-\nu^{X})}(ds,dx)\notag\\
&+\Gamma_t^F,~~\forall t\in\left[ 0,T\right],
\end{align}
where $\Gamma^F$ is an orthogonal and predictable process.
\end{Theorem}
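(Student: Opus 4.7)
The plan, announced in the introduction, is to approximate $X$ by processes $X^n$ having only finitely many jumps on $[0,T]$, obtained by removing the small jumps; apply the continuous-paths $\mathbb{C}^{0,1}$-functional It\^o formula of \cite{bouchard2021ac} piecewise between the remaining jumps; and pass to the limit $n\to\infty$. Before starting, I would reduce by standard localization to the case where $F(\cdot,X)$, $\nabla_\xr F(\cdot,X)$, $[X]_T$ and $\sum_{s\le T}|\Delta A_s|$ are bounded, $M$ is a square-integrable martingale, and $\int_{[0,T]\times\R^d}(|x|^2\wedge 1)\,\nu^X(ds,dx)<\infty$. Fixing $n\ge 1$, I would enumerate the (a.s.~finite) set of jump times of $X$ with $|\Delta X|\ge 1/n$ as $0=\tau^n_0<\tau^n_1<\cdots<\tau^n_{K_n}\le T$ and set
$$
X^n_t := X^c_t + \sum_{k:\tau^n_k\le t}\Delta X_{\tau^n_k},
$$
so that $X^n$ has only the $\tau^n_k$ as jump times on $[0,T]$, jumps by the full $\Delta X_{\tau^n_k}$ at each of them, and between consecutive large jumps coincides with $Z_k\oplus_{\tau^n_k}(X^c-X^c_{\tau^n_k})$ where $Z_k:=X^n_{\tau^n_k\wedge}$. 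Moreover $X^n\to X$ in a suitable sense (in $L^2$ uniformly on $[0,T]$, after localization).

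Applying \cite{bouchard2021ac} on each piece $[\tau^n_k,\tau^n_{k+1})$---the relevant instance of Assumption~$(A)$ being exactly the one postulated in the theorem, with $Z=Z_k$ and $\tau=\tau^n_k$---yields
$$
F_t(X^n)-F_{\tau^n_k}(X^n)=\int_{\tau^n_k}^t\nabla_\xr F_s(X^{n,-})\,dM^c_s+\bigl(\Gamma^{F,n,k}_t-\Gamma^{F,n,k}_{\tau^n_k}\bigr),\quad t\in[\tau^n_k,\tau^n_{k+1}),
$$
with $\Gamma^{F,n,k}$ continuous, predictable and orthogonal. Concatenating in $k$ and adding the jump contributions $F_{\tau^n_k}(X^{n,-}\oplus_{\tau^n_k}\Delta X_{\tau^n_k})-F_{\tau^n_k}(X^{n,-})$ at each $\tau^n_k$, then isolating the linear term $\Delta X\cdot\nabla_\xr F$ and compensating it (which, combined with $\int\nabla_\xr F\,dM^c$, reconstitutes $\int\nabla_\xr F\,dM$), produces a decomposition of $F(X^n)$ of the exact form \eqref{eq: Ito thm} but with the compensated jump integral restricted to $\{|x|\ge 1/n\}$ and a globally predictable orthogonal residual $\Gamma^{F,n}$.

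It then remains to pass to the limit $n\to\infty$. Continuity of $F$ and $\nabla_\xr F$ on $(\Theta,d_\Theta)$ together with the uniform bounds from localization give, via dominated convergence, u.c.p.~convergence of the It\^o integral against $M$. For the compensated jump integral, the $\mathbb{C}^{0,1}$-regularity provides a pointwise Taylor-type bound $|F_s(\xr\oplus_s x)-F_s(\xr)-x\cdot\nabla_\xr F_s(\xr)|\le|x|\,\omega_{\xr,s}(|x|)$ with $\omega_{\xr,s}(r)\to 0$ as $r\to 0$, which together with $\int|x|^2\,d\nu^X<\infty$ (implied by $[X]_T<\infty$ after localization) controls the small-jumps tail and yields u.c.p.~convergence of the truncated compensated integral to the full one. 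Consequently $\Gamma^{F,n}$ converges u.c.p.~to some process $\Gamma^F$, which is predictable (as u.c.p.~limit, along a subsequence, of predictable processes) and orthogonal, since $[\Gamma^{F,n},N]=0$ passes to the limit for every continuous local martingale $N$. The main obstacle I expect lies in this last step: the Taylor remainder is only $o(|x|)$ in general (not $O(|x|^2)$), so closing the compensator estimate uniformly in $n$ along the random paths $X^{n,-}$ requires a careful handling of the modulus of continuity of $\nabla_\xr F$, and this is precisely where the right-limit hypothesis on $t\mapsto\nabla_\xr F_t(X^-)$ enters.
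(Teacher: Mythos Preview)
Your overall strategy is exactly the paper's: approximate by removing small jumps, apply the continuous $\mathbb{C}^{0,1}$-formula piecewise between the remaining (finitely many) jump times, concatenate, and pass to the limit to deduce predictability of $\Gamma^F$. However, your definition of the approximating process contains a genuine gap. Setting $X^n_t := X^c_t + \sum_{k:\tau^n_k\le t}\Delta X_{\tau^n_k}$ does \emph{not} give $X^n\to X$: you are removing the small jumps of $X$ without retaining their compensator. Already for $X_t=X_0+N_t-t$ a compensated standard Poisson process (so $M^d=N-t$, $A=0$, $X^c\equiv X_0$), for $n\ge 2$ all jumps are ``large'' and your formula gives $X^n_t=X_0+N_t$, hence $X_t-X^n_t=-t$ for every $n$. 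More generally, whenever $\sum_{s\le T}|\Delta M_s|=\infty$, the uncompensated large-jump sum differs from $M^d$ by a non-vanishing compensator term, and your limit passage for $F(X^n)\to F(X)$ and $\int\nabla_\xr F(X^{n,-})\,dM^n\to\int\nabla_\xr F(X^-)\,dM$ collapses.

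The repair is the paper's definition (Remark~\ref{rem : def Zn}): remove the \emph{compensated} small-jump martingale, i.e.\ set $Z^n:=Y^n+\sum_{s\le\cdot}\Delta A_s\mathbb{1}_{|\Delta A_s|<\epsilon_n}$ with $Y^n:=x\mathbb{1}_{|x|<\epsilon_n}\ast(\mu^X-\nu^X)$ and $X^n:=X-Z^n$. Then $\|Z^n\|+[Z^n]_T\to 0$ a.s., $X^n$ keeps the full martingale structure of $X$ minus a genuinely small piece, and your piecewise argument goes through verbatim. Two smaller remarks: (i) the paper first establishes the decomposition with $\Gamma^F$ merely orthogonal via Proposition~\ref{ItoC01standard}, and only then proves predictability by showing $\Gamma^{F,n}\to\Gamma^F$ u.c.p.; this spares you from passing orthogonality through the limit. (ii) The right-limit hypothesis on $t\mapsto\nabla_\xr F_t(X^-)$ is not what closes the small-jump tail estimate you worry about at the end; it is used inside Proposition~\ref{ItoC01standard} to identify the covariation term. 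The convergence of the truncated compensated jump integrals is handled purely by the locally uniform modulus of continuity of $\nabla_\xr F$, see Lemma~\ref{Lconv}.
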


Before to provide the proof of this result, let us make several comments. 

\begin{Remark}
All the terms in \eqref{eq: Ito thm} are well defined. In particular, 
\begin{align*}
&\int_{\left] 0,\cdot\right] \times\mathbb{R}^d}(F_s(X^-\oplus_s x)-F_s(X^-))\mathbb{1}_{\left\lbrace \lvert{x}\rvert\leq{1}\right\rbrace }(\mu^X-\nu^X)(ds,dx)\\ &\int_{\left] 0,\cdot\right] \times\mathbb{R}^d}x\nabla_\xr F_{s}(X^-)\mathbb{1}_{\left\lbrace \lvert{x}\rvert\leq{1}\right\rbrace }(\mu^X-\nu^X)(ds,dx)
\end{align*}
are purely discontinuous local martingales. See Lemma \ref{purely _discontinuous} below.
\end{Remark}

\begin{Remark}
If $X$ is {a semimartingale}, then  \eqref{eq: hyp jumps} holds.
\end{Remark}

\begin{Remark}\label{rem : def Zn} Let $(\epsilon_n)_{n\in\mathbb{N}}\subset (0,1)^{\N}$ be a decreasing  sequence of positive real numbers converging to $0$. Using \eqref{eq: hyp jumps}, we can define   $Z^n:=Y^n+\sum_{s\leq\cdot}\Delta A_s\mathbb{1}_{\lvert\Delta A_s\rvert< \eps_n}$ where $Y^n:=x\mathbb{1}_{\left\lbrace \lvert x\rvert< \eps_n\right\rbrace }\ast(\mu^X-\nu^X)$ is a purely discontinuous local martingale  (see \cite[Theorem ~II.2.34]{jacod2013limit}). Then,  $Z^n$ is an  orthogonal special semi-martingale with jumps not larger than $\epsilon_n$, namely $\lvert\Delta Z^n_t\rvert<\epsilon_n$ $\forall t\in \left[ 0,T\right]$ a.s., such that $X^n:=X-Z^n$ only has jumps larger than $\epsilon_n$.  Moreover, $\lvert\!\lvert Z^n\rvert\!\rvert+\left[  Z^n\right]_T  {\to}0$ a.s~as $n\to \infty$.  
\end{Remark}

\begin{Remark}\label{rem: applique A a Xn} For simplicity of exposition of our main result, we assumed that $(F,Z\oplus_\tau(X^c-X_\tau^c))$ satisfies Assumption $(A)$ for every càdlàg process $Z$ and stopping time $\tau$ such that $\tau\le T$ a.s.
In the proof, we shall actually only use the fact that $(F,X^{n}\oplus_\tau(X^c-X_\tau^c))$ satisfies Assumption $(A)$ for all stopping time $\tau$ corresponding to a jump time of $X^{n}$, for all $n\ge 1$.
\end{Remark}

\begin{proof}[Proof of Theorem \ref{TIto}]
{\rm 1.} The fact that the decomposition \eqref{eq: Ito thm} holds with $\Gamma^{F}$ orthogonal, but not necessarily predictable, follows from the same arguments as in \cite{bandini2017weak,bouchard2021ac}, see Proposition \ref{ItoC01standard} in the Appendix. 
%It follows that   $(F_{t}(X))_{t\in{\left[ 0,T \right] }}$ is a weak Dirichlet process with decomposition
%\begin{align*}\label{Ito}
%F_{t}(X)=F_{0}(X)+&\int_0^t\nabla_\xr F_{s}(X^-) dM_s\notag\\
%&+\int_{\left] 0,t\right] \times\mathbb{R}^d}(F_s(X^-\oplus_s x)-F_s(X^-)-x\nabla_\xr F_{s}(X^-))\mathbb{1}_{\left\lbrace \lvert{x}\rvert>{1}\right\rbrace }\mu^X(ds,dx)\notag\\
%&+\int_{\left] 0,t\right] \times\mathbb{R}^d}(F_s(X^-\oplus_s x)-F_s(X^-))\mathbb{1}_{\left\lbrace \lvert{x}\rvert\leq{1}\right\rbrace }(\mu^X-\nu^X)(ds,dx)\notag\\
%&-\int_{\left] 0,t\right] \times\mathbb{R}^d}x\nabla_\xr F_{s}(X^-)\mathbb{1}_{\left\lbrace \lvert{x}\rvert\leq{1}\right\rbrace }(\mu^X-\nu^X)(ds,dx)\notag\\
%&+\Gamma_t^F,~~\forall t\in\left[ 0,T\right],
%\end{align*}
We therefore just have to show that $\Gamma^F$ is predictable.

\noindent {\rm 2.} Let $(\epsilon_n,X^{n},Y^{n},Z^{n})_{n\in\mathbb{N}}$ be as in Remark \ref{rem : def Zn}.  
Fix $n\in\mathbb{N}$ and let $(\tau_k^n)_{k\in\mathbb{N}}$ be the sequence of stopping times corresponding to the jumps of $X$ larger or equal to $\epsilon_n$,  namely $\tau_0^n=0$ and $\tau_{k+1}^n=\inf\{ s> \tau_k^n$ s.t.~$\lvert\Delta X_s \rvert\geq \epsilon_n\}$.  These are the jump times of $X^{n}$. Then,  $K^n:=\min\{ k\in \N$ s.t.~$\tau_k^n\wedge T=T \} $ is finite a.s.~and, for $t\in\left[ 0,T\right] $,
\begin{align*} 
F_t(X^n)-F_0(X^n)=&\sum_{k=0}^{K^n-1}F_{\tau_{k+1}^n\wedge t}(X^n)-F_{\tau_{k}^n\wedge t}(X^n)\\
=&\sum_{k=0}^{K^n-1}\big[F_{\tau_{k+1}^n\wedge t}(X^n)-F_{\tau_{k+1}^n\wedge t}(X^{n-})-\Delta X_{\tau_{k+1}^n\wedge t}^n\nabla_\xr F_{\tau_{k+1}^n\wedge t}(X^{n-})\\
&~~~~~~~~+F_{\tau_{k+1}^n\wedge t}(X^{n-})-F_{\tau_{k}^n\wedge t}(X^n)+\Delta X_{\tau_{k+1}^n\wedge t}^n\nabla_\xr F_{\tau_{k+1}^n\wedge t}(X^{n-})\big]\\
=& R^{1,n}_t+R^{2,n}_t+R^{3,n}_t
\end{align*}
where 
\begin{align*} 
R^{1,n}_t=&\int_{\left] 0,t\right] \times\mathbb{R}^d}(F_s(X^{n-}\oplus_sx)-F_s(X^{n-})-x\nabla_\xr F_{s}(X^{n-}))\mathbb{1}_{\left\lbrace \lvert{x}\rvert>{1}\right\rbrace }\mu^X(ds,dx)\\
&+\int_{\left] 0,t\right] \times\mathbb{R}^d}(F_s(X^{n-}\oplus_sx)-F_s(X^{n-}))\mathbb{1}_{\left\lbrace \epsilon_n\leq\lvert{x}\rvert\leq{1}\right\rbrace }(\mu^X-\nu^X)(ds,dx)\\
&-\int_{\left] 0,t\right] \times\mathbb{R}^d}x\nabla_\xr F_{s}(X^{n-})\mathbb{1}_{\left\lbrace \epsilon_n\leq\lvert{x}\rvert\leq{1}\right\rbrace }(\mu^X-\nu^X)(ds,dx)\\
R^{2,n}_t=&\sum_{k=0}^{K^n-1}\left[ F_{\tau_{k+1}^n\wedge t}(X^{n-})-F_{\tau_{k}^n\wedge t}(X^n)+\Delta M^{n}_{\tau_{k+1}^n\wedge t}\nabla_\xr F_{\tau_{k+1}^n\wedge t}(X^{n-})\right] \\
R^{3,n}_t=&\int_{\left] 0,t\right] \times\mathbb{R}^d}(F_s(X^{n-}\oplus_sx)-F_s(X^{n-})-x\nabla_\xr F_{s}(X^{n-}))\mathbb{1}_{\left\lbrace \epsilon_n\leq\lvert{x}\rvert\leq{1}\right\rbrace }\nu^X(ds,dx)\\
&+\sum_{k=0}^{K^n-1}\Delta A^{n}_{\tau_{k+1}^n\wedge t}\nabla_\xr F_{\tau_{k+1}^n}(X^{n-}), 
\end{align*}
in which $M^{n}$ and $A^{n}$ denote respectively the martingale and the bounded variation part of $X^{n}$. 
By hypothesis, for all $k=0,\ldots,K^n-1$, the couple $(F,X^n\oplus_{\tau_k^n}(X^c-X^c_{\tau_k^n}))$ satisfies Assumption $(A)$. Moreover, by definition of $X^n$ and  $(\tau_k^n)_{k\in\N}$,   $X^n\boxplus_{\tau_{k+1}^n}X^n_{\tau_{k+1}^n-}$ is continuous on $\left[ \tau_k^n,\tau_{k+1}^n\right]$ and coincides with  $X^n\oplus_{\tau_k^n}(X^c-X^c_{\tau_k^n})$ on $\left[ 0,\tau_{k+1}^n\right]$. By Proposition \ref{ItoC01standard}, we can then find  an adapted orthogonal process $\Gamma^{F,n,k}$  such that
\begin{equation}
F_{t}(X^n\boxplus_{\tau_{k+1}^n}X^n_{\tau_{k+1}^n-})-F_{\tau_{k}^n}(X^n)=\int_{\tau_k^n}^t\nabla_\xr F_s(X^{n-})dM^{c}_s+\Gamma^{F,n,k}_t-\Gamma^{F,n,k}_{\tau_{k}^n}~~~\forall t\in\left[ \tau_k^n,\tau_{k+1}^n\right]
\end{equation}
in which we used that   $X^n$ and $X$ have the same continuous martingale part. 
By continuity of $F$ and the path  of $X^n\boxplus_{\tau_{k+1}^n}X^n_{\tau_{k+1}^n-}$ on $\left[ \tau_{k}^n,\tau_{k+1}^n\right]$, we see from the above that $\Gamma^{F,n,k}$ is continuous on $\left[\tau_{k}^n,\tau_{k+1}^n\right]$.
Then{,}  
\begin{align*}
R^{2,n}_t&=\sum_{k=0}^{K^n-1}\int_{\tau_k^n\wedge t}^{\tau^n_{k+1}\wedge t}\nabla_\xr F_s(X^{n-})dM^{c}_s+\Gamma^{F,n,k}_{\tau_{k+1}^n\wedge t}-\Gamma^{F,n,k}_{\tau_{k}^n\wedge t}+\Delta M^n_{\tau_{k+1}^n\wedge t}\nabla_\xr F_{\tau_{k+1}^n\wedge t}(X^{n-})\\
&=\int_0^t \nabla_\xr F_s(X^{n-})dM^n_s+\sum_{k=0}^{K^n-1}\Gamma^{F,n,k}_{\tau_{k+1}^n\wedge t}-\Gamma^{F,n,k}_{\tau_{k}^n\wedge t}.
\end{align*}
Let us define 
\begin{equation*}
\Gamma^{F,n}_t=R^{3,n}_t+\sum_{k=0}^{K^n-1}\Gamma^{F,n,k}_{\tau_{k+1}^n\wedge t}-\Gamma^{F,n,k}_{\tau_{k}^n\wedge t}, \;t\le T.
\end{equation*}
It follows from the above that  $\Gamma^{F,n}$ is predictable as a sum of predictable processes. 

{\rm 3.} Let us now show that 
\begin{equation}\label{conv_int_sto}
\int_0^\cdot \nabla_\xr F_s(X^{n-})dM^n_s\to\int_0^\cdot \nabla_\xr F_s(X^-)dM_s ~~\mbox{u.c.p.}
\end{equation}
on $[0,T]$.
We have
\begin{equation*}
\int_0^t \nabla_\xr F_s(X^{n-})dM^n_s=\int_0^t \nabla_\xr F_s(X^{n-})dM_s-\int_0^t \nabla_\xr F_s(X^{n-})dY^n_s
\end{equation*}
Since $Y^n$ is a purely discontinuous martingale such that $\lvert\!\lvert Y^n\rvert\!\rvert\to0$ a.s.~and $\nabla_\xr F$ is locally bounded, we can assume, up to using a localizing sequence, that   $(\nabla_\xr F(X^{n-}),  Y^n,\left[ Y^n\right] )_{n}$ is uniformly bounded by a constant $C$. Then, since $\lvert\!\lvert X^n-X \rvert\!\rvert\to 0$ a.s.~and $\nabla_\xr F$ is continuous, we deduce from \cite[Theorem~I.4.31]{jacod2013limit} that
\begin{equation*} 
\int_0^\cdot \nabla_\xr F_s(X^{n-})dM_s\to\int_0^\cdot \nabla_\xr F_s(X^-)dM_s\;\;\;\mbox{u.c.p.}
\end{equation*} 
on $[0,T]$. 
Moreover, 
\begin{align*}
\mathbb{E}\left[ \underset{t\in\left[ 0,T\right]}{\sup} \lvert\int_0^{t} \nabla_\xr F_s(X^{n-})dY^n_s\rvert^2\right] &\leq 4\mathbb{E}\left[ \int_0^{T} (\nabla_\xr F_s(X^{n-}))^2 d\left[ Y^n\right] _s\right]\\
&\leq 4C^{2}\mathbb{E}\left[ \left[ Y^n\right] _{T}\right] 
\end{align*}
in which the last term tends to $0$ as $n$ goes to $+\infty$, by dominated convergence. This proves \eqref{conv_int_sto}.

Similarly, by applying  Lemma \ref{Lconv} below, we deduce that  $R^{1,n}$ converges u.c.p.~on $[0,T]$ to 
\begin{align*}
t\in [0,T]\mapsto&  \int_{\left] 0,t\right] \times\mathbb{R}^d}(F_s(X{^{-}}\oplus_s x)-F_s(X^-)-x\nabla_\xr F_{s}(X^-))\mathbb{1}_{\left\lbrace \lvert{x}\rvert>{1}\right\rbrace }\mu^X(ds,dx)\\
&+\int_{\left] 0,t\right] \times\mathbb{R}^d}(F_s(X{^{-}}\oplus_s x)-F_s(X^-))\mathbb{1}_{\left\lbrace \lvert{x}\rvert\leq{1}\right\rbrace }(\mu^X-\nu^X)(ds,dx)\\
&-\int_{\left] 0,t\right] \times\mathbb{R}^d}x\nabla_\xr F_{s}(X^-)\mathbb{1}_{\left\lbrace \lvert{x}\rvert\leq{1}\right\rbrace }(\mu^X-\nu^X)(ds,dx). 
\end{align*}

Finally, since $\lvert\!\lvert X^n-X \rvert\!\rvert\to 0$ a.s., we have $\lvert\!\lvert F_{\cdot}(X^n)-F_{\cdot}(X)\rvert\!\rvert\to 0$ a.s.~by local uniform continuity of $F$.

\noindent{\rm 4.} Combining steps 1.~to 3.~above, we obtain that the sequence of predictable processes $(\Gamma^{F,n})_{n\in \N}$
 converges to   $ \Gamma^F$ u.c.p., which implies that $ \Gamma^F$ is predictable, and concludes the proof. 
\end{proof}

We conclude this section with the proof of the technical lemma that was used in the proof of Theorem  \ref{TIto}. We borrow the standard notations $\mathcal{A}_{loc}^+$ and $\mathcal{G}_{loc}^2(\mu^X)$ from \cite[Section I.3.a., Section II.1.d.]{jacod2013limit}.

\begin{Lemma}\label{Lconv}
Let $(\epsilon_n,X^{n},Y^{n},Z^{n})_{n\in\mathbb{N}}$ be as in the proof of Theorem \ref{TIto}.\\
Define $H^n_s(x)=(F_s(X^{n-}\oplus_sx)-F_s(X^{n-})-x\nabla_\xr F_{s}(X^{n-}))\mathbb{1}_{\left\lbrace \epsilon_n\leq\lvert{x}\rvert\leq 1 \right\rbrace }$ for $(s,x)\in [0,T]\x \R^{d}$.  Then, $H^n_s(x)\ast(\mu^X-\nu^X)$ is a sequence of purely discontinuous local martingales that converges to\\ $t\mapsto\int_{\left] 0,t\right] \times\mathbb{R}^d}(F_s(X{^{-}}\oplus_s x)-F_s(X^-)-x\nabla_\xr F_{s}(X^-))\mathbb{1}_{\left\lbrace \lvert{x}\rvert\leq{1}\right\rbrace }(\mu^X-\nu^X)(ds,dx)$ u.c.p.
\end{Lemma}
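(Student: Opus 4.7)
The plan is to first verify that each $H^n$ lies in $\Gc^{2}_{loc}(\mu^X)$, so that $H^n\ast(\mu^X-\nu^X)$ is well defined as a purely discontinuous local martingale, and then to show the u.c.p.~convergence via a second-moment estimate combined with dominated convergence. A $\C^{0,1}$-Taylor expansion will play the central role.

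First, writing
$$
F_s(X^{n-}\oplus_s x)-F_s(X^{n-})
=\int_0^1 x\cdot\nabla_\xr F_s\bigl(X^{n-}\oplus_s\lambda x\bigr)\,d\lambda,
$$
we obtain
$$
\bigl|H^n_s(x)\bigr|\le |x|\int_0^1\bigl|\nabla_\xr F_s(X^{n-}\oplus_s\lambda x)-\nabla_\xr F_s(X^{n-})\bigr|d\lambda\cdot\mathbb{1}_{\{\eps_n\le|x|\le 1\}}.
$$
Since $\nabla_\xr F\in\C^{u,b}_{loc}(\Theta)$, on the event $\{\sup_{t\le T}\|X^{n-}_t\|\le K\}$ and for $|x|\le 1$, this is dominated by $|x|\,\delta_{K+1}(\nabla_\xr F,|x|)$. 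Introducing a localizing sequence $(T_m)_{m}$ that bounds $\|X\|_{\infty}$ and $[X]_T$, and using $\lvert\!\lvert X^{n}-X \rvert\!\rvert\to 0$ a.s., we get the uniform estimate $|H^{n}_{s}(x)|\le C|x|\mathbb{1}_{\{|x|\le 1\}}$ on $[0,T_m]$ for $n$ large. The dominating function is then $\nu^X$-square-integrable locally because
$$
\int_{\left]0,T_m\right]\times\R^d}|x|^{2}\mathbb{1}_{\{|x|\le 1\}}\nu^X(ds,dx)
~=~\E\Big[\sum_{s\le T_{m}}|\Delta X_s|^{2}\mathbb{1}_{\{|\Delta X_s|\le 1\}}\Big]
\le \E\bigl[[X]_{T_m}\bigr]<\infty.
$$
Predictability of the integrand is inherited from that of $(s,\om)\mapsto X^{n-}(\om)$ and the continuity of $F$ and $\nabla_\xr F$. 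Thus $H^n\in\Gc^{2}_{loc}(\mu^X)$ and $H^n\ast(\mu^X-\nu^X)$ is a purely discontinuous local martingale.

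Second, let $H_s(x):=(F_s(X^-\oplus_s x)-F_s(X^-)-x\nabla_\xr F_s(X^-))\mathbb{1}_{\{|x|\le 1\}}$, which, by the same Taylor expansion, also lies in $\Gc^{2}_{loc}(\mu^X)$. We split
$$
H^n_s(x)-H_s(x)=\bigl(H^n_s(x)-H_s(x)\bigr)\mathbb{1}_{\{\eps_n\le |x|\le 1\}}-H_s(x)\mathbb{1}_{\{|x|<\eps_n\}}.
$$
For the first piece, local uniform continuity of $F$ and $\nabla_\xr F$ together with $\lvert\!\lvert X^n-X\rvert\!\rvert\to 0$ a.s.\ yield $H^n_s(x)\mathbb{1}_{\{\eps_n\le|x|\le 1\}}\to H_s(x)\mathbb{1}_{\{|x|\le 1\}}$ pointwise; for the second piece, $H_s(x)\mathbb{1}_{\{|x|<\eps_n\}}\to 0$ pointwise as $\eps_n\searrow 0$. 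In both cases the Taylor bound gives the common domination $|H^n_s(x)-H_s(x)|^{2}\le C|x|^{2}\mathbb{1}_{\{|x|\le 1\}}$. After localizing as above, Doob's inequality applied to the purely discontinuous martingale $(H^n-H)\ast(\mu^X-\nu^X)$ yields
$$
\E\Big[\sup_{t\le T_m}\bigl|(H^n-H)\ast(\mu^X-\nu^X)_t\bigr|^{2}\Big]
\le 4\,\E\Big[\int_{\left]0,T_m\right]\times\R^d}|H^n_s(x)-H_s(x)|^{2}\nu^X(ds,dx)\Big],
$$
and the right-hand side tends to $0$ by dominated convergence. This gives convergence in probability of the supremum on $[0,T_m]$, and letting $m\to\infty$ we obtain the claimed u.c.p.~convergence on $[0,T]$.

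The main obstacle is the control of the integrand uniformly in $n$: the naive pointwise convergence of $H^n$ to $H$ is not enough, and one must exploit the $\C^{0,1}$-Taylor expansion to get an $L^{2}(\nu^X)$-dominating function, namely $|x|^2\mathbb{1}_{\{|x|\le 1\}}$, that is integrable thanks to the finiteness of $[X]_T$. The localization step is what makes all the expectations finite and legitimizes the dominated convergence argument.
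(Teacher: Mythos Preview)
Your argument is essentially the paper's: the same $\C^{0,1}$-Taylor expansion, the same splitting of $H^n-H$ into the pieces on $\{\eps_n\le|x|\le 1\}$ and $\{|x|<\eps_n\}$, and the same Doob/$L^2(\nu^X)$ conclusion --- the paper writes out explicit modulus-of-continuity bounds where you invoke dominated convergence, a purely cosmetic difference. The one point to tighten is the localization: bounding only $\|X\|$ and $[X]$ yields the estimate $|H^n_s(x)|\le C|x|$ only ``for $n$ large'' with an $\omega$-dependent threshold, which does not give a single dominating function valid for all $n$; the paper additionally localizes $(Z^n)_n$ uniformly in $n$ (possible since $\|Z^n\|\to 0$ a.s.\ implies $\sup_n\|Z^n\|<\infty$ a.s.), so that $\|X^{n-}\|$ is bounded on $[0,T_m]$ uniformly in $n$, and you should do the same.
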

\begin{proof}
Let us define
\begin{align*}
&V^{1,n}_s(x)=(F_s(X^{-}\oplus_sx)-F_s(X^{n-}\oplus_sx)+F_s(X^{n-})-F_s(X^{-})+x\nabla_\xr F_{s}(X^{n-})-x\nabla_\xr F_{s}(X^{-}))\mathbb{1}_{\left\lbrace \epsilon_n\leq\lvert{x}\rvert\leq 1 \right\rbrace }\\
&V^{2,n}_s(x)=(F_s(X^-\oplus_sx)-F_s(X^-)-x\nabla_\xr F_{s}(X^-))\mathbb{1}_{\left\lbrace \lvert{x}\rvert<\epsilon_n\right\rbrace } 
\end{align*}
for $(s,x)\in [0,T]\x \R^{d}$. By linearity, it suffices to show that ${I^{i,n}}:={V^{i,n}}\ast(\mu^X-\nu^X)$  converge to $0$ u.c.p., for $i=1,2$.\\
We recall that any càglàd process is locally bounded.  Furthermore,  since $X$ is càdlàg and has finite quadratic variation, we have $\sum_{s\in\left[ 0,T\right]}\lvert\Delta X_s\rvert^2<+\infty$ a.s.~by \cite[Lemma~2.10]{bandini2017weak}. We also recall that $(Z^n)_{n\in\N}$ is uniformly locally bounded{, see Remark \ref{rem : def Zn}}.  Consider the c\`adl\`ag process $E:=(X_{t-},\sum_{s<t} \lvert\Delta X_s\rvert ^2)_{t\geq 0}$ and let $(S_m)_{m\in\N}$ be a localization sequence  such that for all $m\in\N$ the processes $((Z^n,E)_{\cdot\wedge S^m}\mathbb{1}_{S^m>0})_{n\in\N}$ are uniformly bounded in $n$. It suffices to show that ${(V^{i,n}\ast(\mu^X-\nu^X))_{\cdot\wedge S}}$  converge to $0$ u.c.p. for a fixed  $S=S^m$, $i=1,2$. Let $C$ be such that $\|E_{\cdot \wedge S}\|\vee\|Z^n_{\cdot \wedge S}\|\leq{C}$ for all $n$ a.s.
Then, 
\begin{align} 
\mathbb{E}\left[ ({\lvert V^{2,n}\rvert ^2\ast \nu^X})_{T\wedge S}\right]&=\mathbb{E}\left[\int_{\left] 0,T\wedge S\right] \times\mathbb{R}^d}\lvert (F_s(X^-\oplus_sx)-F_s(X^-)-x\nabla_\xr F_{s}(X^-))\mathbb{1}_{\left\lbrace \lvert{x}\rvert<\epsilon_n\right\rbrace }\rvert ^2\mu^X(ds,dx)\right]\nonumber\\
&=\mathbb{E}\left[\sum_{\substack{s\leq T\wedge S\\0<\lvert \Delta{X_s}\rvert<{\eps_n}}}\lvert (F_s(X)-F_s(X^-)-\Delta X_s\nabla_\xr F_{s}(X^-))\rvert ^2\right]\nonumber\\
&=\mathbb{E}\left[\sum_{\substack{s\leq T\wedge S\\0<\lvert \Delta{X_s}\rvert<{\eps_n}}} \lvert\Delta{X_s}\rvert^2\;\lvert\int_0^1\{\nabla_\xr{F}_s(X^-\oplus_{s}\lambda\Delta{X_s})-\nabla_\xr F(X^-)\}d\lambda\rvert^2\right]\nonumber\\
&\leq\delta^2_{C +1}(\nabla_\xr F,\epsilon_n)\mathbb{E}\left[\sum_{\substack{s\in\left]0,T\wedge S\right[\\0<\lvert \Delta{X_{s}}\rvert<{\eps_n}}}  \lvert\Delta{X_{s}}\rvert^2\mathbb{1}_{S>0}+\lvert\Delta{X_{T\wedge S}}\rvert^2\mathbb{1}_{S>0}\mathbb{1}_{\lvert\Delta{X}_{T\wedge S}\rvert\leq{1}}\right]\nonumber\\
&\leq\delta^2_{C +1}(\nabla_\xr F,\epsilon_n) (C+1)\label{eq: inega borne avec C 1}
\end{align} 
where $\delta_\cdot(\nabla_\xr F,\cdot)$ denotes the modulus of continuity of $\nabla_\xr F$ defined in \eqref{def_module_continuit}.
In the same way,
\begin{align} \label{eq: inega borne avec C 2}
\mathbb{E}&\left[ ({\lvert V^{1,n}\rvert ^2\ast \nu^X})_{T\wedge S}\right]\nonumber\\&=\mathbb{E}\left[\sum_{\substack{{s\leq T\wedge S}\\{0<\lvert \Delta{X_s}\rvert\le 1} }} \lvert\Delta{X_s}\rvert^2\;\lvert\int_0^1\{\nabla_\xr{F}_s(X^-\oplus_{s}\lambda\Delta{X_s})-\nabla_\xr F(X^{n-}\oplus_{s}\lambda\Delta{X_s})\}d\lambda+\nabla_\xr F(X^{n-})-\nabla_\xr F(X^{-})\rvert^2\right]\nonumber\\
&\leq 4(C+1)\mathbb{E}[\delta^2_{C +1}(\nabla_\xr F,\|Z^n\|_{[0,S\wedge T]})]
\end{align}
where $\|\xr\|_{[0,t]}=\sup_{s\in[0,t]}\lvert\xr_s\rvert$ for all $(t,\xr)\in\Theta$.\\
Thus,  for $i=1,2$, $V^{i,n}$   belongs to $\mathcal{G}^2_{loc}(\mu^X)$ by \cite[Lemma~2.4]{bandini2018special}, and $I^{i,n}_{\cdot\wedge S}$ is a purely discontinuous square integrable martingale by \cite[Theorem~11.21]{he2019semimartingale}.  Also,  by \cite[3) of Theorem 11.21]{he2019semimartingale},we have \begin{equation}\label{eq: borne [In]}
 \left[ I^{i,n}\right]_{t\wedge S}  =\ \int_{\left] 0,t\wedge S\right] \times\mathbb{R}^d}\lvert{V^{i,n}_s(x)}\rvert^{2}   \nu^{X}(ds,dx)-\sum_{0<s\leq t\wedge S}\lvert\hat V^{i,n}_s\rvert^2 \leq \int_{\left] 0,t\wedge S\right] \times\mathbb{R}^d}\lvert{V^{i,n}_s(x)}\rvert^{2}   \nu^{X}(ds,dx)
\end{equation}
where $\hat V^{i,n}_s=\int_{\mathbb{R}^d}V^{i,n}_s(x)\nu(\left\lbrace s\right\rbrace ,dx)$, for $i=1,2$.  \\
Hence, we can apply  Doob's maximal inequality to the square integrable martingale $I^{i,n}_{\cdot\wedge S}${, $i=1,2$,} and then use \eqref{eq: inega borne avec C 1}, \eqref{eq: inega borne avec C 2} and \eqref{eq: borne [In]} to obtain that, for any $\alpha>0$, 
\begin{align*}
&\mathbb{P}(\sup_{t\in\left[ 0,T\right]}\lvert{I^{2,n}_{t\wedge S}}\rvert\geq \alpha)\leq\dfrac{1}{\alpha^2}\mathbb{E}\left[ \lvert  I^{2,n}_{T\wedge S}\rvert ^2\right] \leq \dfrac{1}{\alpha^2}\mathbb{E}\left[\int_{\left] 0,T\wedge S\right] \times\mathbb{R}^d}\lvert{V^{2,n}_s(x)}\rvert^{2}   \nu^{X}(ds,dx)\right] \leq\dfrac{1}{\alpha^2} \delta^2_{C+1}(\nabla_\xr F,\epsilon_n) (C+1){,}\\
&\mathbb{P}(\sup_{t\in\left[ 0,T\right]}\lvert{I^{1,n}_{t\wedge S}}\rvert\geq \alpha) \leq \dfrac{1}{\alpha^2}\mathbb{E}\left[\int_{\left] 0,T\wedge S\right] \times\mathbb{R}^d}\lvert{V^{1,n}_s(x)}\rvert^{2}   \nu^{X}(ds,dx)\right] \leq\dfrac{4}{\alpha^2}\mathbb{E}[\delta^2_{C +1}(\nabla_\xr F,\|Z^n\|_{[0,S\wedge T]})](C+1).
\end{align*}
The right-hand side terms tend to $0$ as $n\to \infty$ (by {Remark  \ref{rem : def Zn} and} dominated convergence for the second one), which concludes the proof.
\end{proof}

%%%%%%%%%%%%%%
\section{A toy example of application}

To illustrate our main result, we now provide a simple toy example of application. We keep it as simple as possible. Semilinear and fully-non linear problems have been studied in  \cite{bouchard2021ac,bouchard2021approximate} in the context of continuous path processes and can also be extended to our setting. 

We fix $d=1$. Let $W$ be a standard Brownian motion and $N$ be a compound Poisson process with compensator $\nu_{t}dt$, for some predictable $(t,\omega)\in [0,T]\x \Omega \mapsto \nu_{t}(\omega,\cdot)$ taking values in the set of probability measures on $\R$. Given $(t,\xr)\in \Theta$, we define 
$X^{t,\xr}$ by 
\begin{equation}
X^{t,\xr}_s=\xr_{t\wedge s}+A_{t\vee s}-A_{t}+\int_{t}^{t\vee s}\sigma_sdW_s+\int_{{\left] s,t\vee s\right]} \times\R^d}\gamma_s(y)N(ds,dy),\;\;s\le T,
\end{equation}
 where   $\sigma$ is predictable and bounded, $\gamma$ is $\Pc\otimes \Bc(\R)$-mesurable\footnote{We use the standard notations $\Pc$ (resp.~$\Bc(R)$) for the predictable sigma-field (resp.~the Borel sigma-field).} and bounded, and $A$ is a c\`adl\`ag, bounded, and with bounded variations predictable process.

We then consider a bounded $C^{1+\alpha}(\R)$-map $g:\R\to \R$, for some $\alpha\in (0,1]$, with bounded derivative, and a right-continuous    {measure} $\mu$ with   bounded total variation on $\left[ 0,T\right]$ and at most a finite number of atoms $\{0\le t_{1}\le \cdots \le t_{n}\le T\}$ which are deterministic. We define
\begin{align*}
\vr:~&(t,\xr)\in \Theta\mapsto\E[g(\int_0^TX^{t,\xr}_s\mu(ds))].
\end{align*}
 The following is nothing but a version of the celebrated Clark's formula, see \cite{clark1970representation}, which we retrieve here as a consequence of Theorem \ref{TIto}.
 
 \begin{Proposition} Let the above conditions hold and set $X:=X^{0,\xr}$ for some $\xr \in D([0,T])$. Then, $\vr$ admits a vertical derivative
  $$
\nabla_{\xr}\vr: (t',\xr')\in \Theta\mapsto \E[\nabla g(\int_0^TX^{t',\xr'}_s\mu(ds))\mu([t',T])], 
 $$
 and there exists an orthogonal and predictable process $\Gamma$ such that $\Gamma_{0}=0$ and 
 \begin{align*} 
 g(\int_0^TX_s\mu(ds))=&\vr(0,\xr)+\int_{0}^{T} \nabla_{\xr}\vr(s, X)\sigma_{s}dW_{s}\\
 &+\sum_{s\le T} (\vr(s,X)-\vr(s,X^{-}))-\int_{0}^{T}\int_{\R} (\vr(s,X^{-}\oplus_{s} y)-\vr(s,X^{-}))\nu_{s}(dy) \lambda_{s} ds+\Gamma_{T}.
 \end{align*}
 If moreover $\vr(\cdot,X)$ is a martingale, then $\Gamma\equiv 0$. It is in particular the case if $A$, $\sigma$, $\gamma$, $\lambda$ and $t\in [0,T]\mapsto \nu_{t}$ are deterministic.   
 \end{Proposition}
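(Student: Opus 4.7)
The plan is to verify the hypotheses of Theorem~\ref{TIto} for $F=\vr$ and $X=X^{0,\xr}$, unpack the resulting decomposition into the announced form, and treat the martingale sub-case by uniqueness.

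First I would compute $\nabla_{\xr}\vr$ directly. Inspection of the defining SDE gives the pathwise identity $X^{t',\xr'\oplus_{t'}y}_s-X^{t',\xr'}_s=y\,\mathbb{1}_{s\ge t'}$, because shifting the initial condition by $y$ at time $t'$ leaves the increments of $A$, of $\int_{t'}^{\cdot}\sigma\,dW$, and of the jump integral against $N$ invariant. Hence
\begin{equation*}
\int_0^T X^{t',\xr'\oplus_{t'}y}_s\,\mu(ds)=\int_0^T X^{t',\xr'}_s\,\mu(ds)+y\,\mu([t',T]),
\end{equation*}
and boundedness of $\nabla g$ together with dominated convergence yield the announced formula for $\nabla_{\xr}\vr$. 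Membership of $\vr$ and $\nabla_{\xr}\vr$ in $\mathbb{C}_{loc}^{u,b}(\Theta)$ follows from joint continuity of $(t',\xr')\mapsto \xr'_{t'\wedge\cdot}$, the continuity and boundedness of $g,\nabla g,\sigma,\gamma,A$, and dominated convergence; the existence of right-limits of $t\mapsto\nabla_{\xr}\vr(t,X^{-})$ is immediate from continuity of $\nabla_{\xr}\vr$ together with the fact that $(t',X^{-}_{t'\wedge})\to(t,X_{t\wedge})$ in $d_\Theta$ as $t'\downarrow t$, by right-continuity of $X$.

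The main technical step is verifying Assumption~$(A)$. Reusing the cancellation above to compare $X^{s+\eps,\xr}$ with $X^{s+\eps,\xr_{s\wedge}\boxplus_{s+\eps}\xr_{s+\eps}}$ yields the pathwise bound
\begin{equation*}
\bigl|\vr_{s+\eps}(\xr)-\vr_{s+\eps}(\xr_{s\wedge}\boxplus_{s+\eps}\xr_{s+\eps})\bigr|\leq \|\nabla g\|_\infty\int_{[s,s+\eps)}|\xr_r-\xr_s|\,|\mu|(dr).
\end{equation*}
Decomposing $|\mu|=|\mu|^c+\sum_i|\mu(\{t_i\})|\delta_{t_i}$, the continuous part $|\mu|^c$ does not charge single points, so $\int_{[s,s+\eps)}|\xr_r-\xr_s||\mu|^c(dr)=\int_{(s,s+\eps)}|\xr_{u-}-\xr_s||\mu|^c(du)$, which fits directly into the sufficient condition of Remark~\ref{suff_Ito} with $\phi(\xr,y)=\|\nabla g\|_\infty\, y$ and $db_u(\xr)=|\mu|^c(du)$. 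For each of the finitely many (deterministic) atoms $t_i$, the corresponding piece of the left-hand side of~\eqref{iff_Ito} evaluated at time $t$ is dominated by
\begin{equation*}
\frac{\|\nabla g\|_\infty|\mu(\{t_i\})|}{\eps}\int_{(t_i-\eps)^{+}\wedge t}^{t_i\wedge t}|\xr_{t_i}-\xr_s|\,|N_{s+\eps}-N_s|\,ds\leq 2\|\nabla g\|_\infty|\mu(\{t_i\})|\,\|\xr\|\,\omega(N,\eps),
\end{equation*}
where $\omega(N,\eps)$ is the uniform modulus of continuity of the continuous martingale $N$ on $[0,T]$, which tends to $0$ a.s.~as $\eps\downarrow 0$; standard localization of $\|\xr\|$ then yields u.c.p.~convergence.

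Applying Theorem~\ref{TIto} produces
\begin{equation*}
\vr(t,X)=\vr(0,\xr)+\int_0^t\!\nabla_{\xr}\vr(s,X^-)\,dM_s+\!\int_{(0,t]\times\R}\!\!\bigl[\vr(s,X^-\oplus_s x)-\vr(s,X^-)-x\nabla_{\xr}\vr(s,X^-)\bigr](\mu^X-\nu^X)(ds,dx)+\Gamma_t
\end{equation*}
with $\Gamma$ predictable and orthogonal. Splitting $dM=\sigma\,dW+$ (compensated jump martingale against $N$), and using $\vr(s,X)=\vr(s,X^-\oplus_s\Delta X_s)$ at jump times to collapse the compensated jump integral together with the purely discontinuous martingale piece into $\sum_{s\leq\cdot}(\vr(s,X)-\vr(s,X^-))-\int_0^{\cdot}\!\int_{\R}(\vr(s,X^-\oplus_s y)-\vr(s,X^-))\nu_s(dy)\lambda_s\,ds$, gives the announced decomposition. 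Finally, in the deterministic case, the flow property of the defining SDE gives $X^{t,X}_s=X_s$ a.s.~for $s\ge t$, whence $\vr(t,X)=\E[g(\int_0^T X_s\mu(ds))\mid\mathcal{F}_t]$ is a true martingale; uniqueness of the special weak Dirichlet decomposition (Remark~\ref{rem: weak dirichlet}) then forces $\Gamma\equiv 0$.
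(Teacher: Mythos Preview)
Your argument is sound when $\mu$ is atom-free and then essentially coincides with the paper's treatment of that case, but there is a genuine gap when $\mu$ has atoms. The issue is not Assumption~$(A)$---your direct bound on the atomic contribution via the uniform modulus of continuity of $N$ is correct and rather neat---but the prior regularity hypothesis $\vr\in\mathbb{C}^{0,1}(\Theta)$ with $\nabla_{\xr}\vr\in\mathbb{C}^{u,b}_{loc}(\Theta)$, which Theorem~\ref{TIto} requires. From the explicit formula
\[
\nabla_{\xr}\vr(t,\xr)=\mu([t,T])\,\E\!\Big[\nabla g\Big(\int_0^T X^{t,\xr}_s\,\mu(ds)\Big)\Big],
\]
the scalar factor $t\mapsto\mu([t,T])$ is only left-continuous: it has a right jump of size $-\mu(\{t_i\})$ at every atom $t_i$. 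Hence $\nabla_{\xr}\vr$ fails to be continuous on $\Theta$ for the metric $d_\Theta$, and your appeal to ``joint continuity and dominated convergence'' breaks down precisely at the atoms. Theorem~\ref{TIto} therefore cannot be invoked globally on $[0,T]$.

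This is exactly why the paper proceeds piecewise: the estimates \eqref{eq: estim regul v}--\eqref{eq: estim regul nabla v} yield local uniform continuity of $\vr$ and $\nabla_{\xr}\vr$ only away from the atoms, so Theorem~\ref{TIto} is applied separately on each interval $[t_i,t]$ with $t<t_{i+1}$, and the resulting decompositions are glued using that $X$---and hence $\vr(\cdot,X)$---is a.s.\ continuous at the deterministic times $t_{i+1}$. Your verification of Assumption~$(A)$ across atoms would be useful if the main theorem tolerated finitely many time-discontinuities of $\nabla_{\xr}F$, but as stated it does not, and the piecewise argument is still needed.
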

 
\begin{proof} 1. We first assume that $\mu$ does no have atoms. How to treat the general case will be discussed in step 3.  
First note that, for $(t,\xr)\in \Theta$ and $y\in \R$, 
$$
\left|\vr(t,\xr\oplus_{t}y)-\vr(t,\xr)-\E[\nabla g(\int_0^TX^{t,\xr}_s\mu(ds))\mu([t,T])]y\right|
\le C\E[\{|\mu|([t,T]) |y|\}^{1+\alpha}]
$$ 
for some $C>0$. Since $|\mu|$ is bounded, this implies that 
$$
\nabla_{\xr}\vr(t,\xr)=\E[\nabla g(\int_0^TX^{t,\xr}_s\mu(ds))\mu([t,T])]. 
$$
Clearly, $\vr$ and $\nabla_{\xr}\vr$ are  locally uniformly bounded since $g$, $\nabla g$ and $|\mu|$ are bounded. 

2. Note that  $(\vr,Z)$ satisfies assumption $(A)$ for all càdlàg process $Z$ by Remark \ref{suff_Ito}.
We now prove that 
$\vr$  and $\nabla_{\xr}\vr$  are locally uniformly continuous.  Fix $(t,\xr),(t',\xr')\in\Theta$ with $t'\ge t$. Then, 
by standard estimates based on our boundedness assumptions,
\begin{align*}
\E[\int_0^T\lvert X^{t,\xr}_s-X^{t',\xr'}_s\rvert\mu(ds)]\leq&  C(\lvert\!\lvert\xr_{t\wedge}-\xr '_{t'\wedge}\rvert\!\rvert+\sqrt{t'-t})
\end{align*}
for some $C>0$ that does not depend on $(t,\xr)$ and $(t',\xr')$. Given the above and the fact that $g$ is Lipschitz and $C^{1+\alpha}(\R)$, this implies that 
\begin{align}\label{eq: estim regul v} 
|\vr(t',\xr')-\vr(t,\xr)|&\le C(\lvert\!\lvert\xr_{t\wedge}-\xr '_{t'\wedge}\rvert\!\rvert +(t'-t)^{\frac{1}2})\\
|\nabla_{\xr}\vr(t',\xr')-\nabla_{\xr}\vr(t,\xr)|&\le C(\lvert\!\lvert\xr_{t\wedge}-\xr '_{t'\wedge}\rvert\!\rvert^{\alpha}+(t'-t)^{\frac{\alpha}2}+\mu([t,t']))\label{eq: estim regul nabla v} 
\end{align}
for some $C>0$ that does not depend on $(t,\xr)$ and $(t',\xr')$.

3.  In the general case where $\mu$ has a finite number of atoms $\{0\le t_{1}\le \cdots \le t_{n}\le T\}$, then \eqref{eq: estim regul v}-\eqref{eq: estim regul nabla v}  shows that $\vr$ and $\nabla_{\xr}\vr$ are  locally uniformly bounded and  locally uniformly continuous on each closed and convex interval of $\cup_{i=0}^{n} [t_{i},t_{i+1})$, with the convention that $t_{0}=0$ and $t_{n+1}=T$. Moreover, \eqref{eq: estim regul v} also shows that $\vr(\cdot,X^{-})$ admit right-limits, for all $X:=X^{0,\xr}$ for some $\xr \in D([0,T])$. Then, one can apply Theorem \ref{TIto} on intervals of the form $[t_{i},t]$ with $t_{i}\le t<t_{i+1}$ and $0\le i\le n$. Since, like $X$, $\vr(\cdot,X)$ is a.s.~continuous at $t_{i+1}$, this implies that 
\begin{align*} 
\vr(t\wedge t_{i+1}, X)=&\vr(t_{i}, X)+\int_{t_{i}}^{t\wedge t_{i+1}} \nabla_{\xr}\vr(s, X)dM^{c}_{s}\\
&+\sum_{s\le t\wedge t_{i+1}} (\vr(s,X)-\vr(s,X^{-}))-\int_{0}^{t\wedge t_{i+1}}\int_{\R} (\vr(s,X^{-}\oplus_{s} y)-\vr(s,X^{-}))\nu_{s}(dy) \lambda_{s} ds\\
&+\Gamma_{t\wedge t_{i+1}}-\Gamma_{t_{i}}, \; t\in [t_{i},t_{i+1}],
\end{align*}
in which $M^{c}$ is the continuous martingale part of $X$ and $\Gamma$ is predictable and orthogonal.

4. In the case where $A$, $\sigma$, $\gamma$, $\lambda$ and $t\in [0,T]\mapsto \nu_{t}$  are deterministic, then one easily checks that $\vr(\cdot,X)$ is a martingale. If the later holds, then $\Gamma\equiv 0$ by uniqueness of the martingale decomposition.
\end{proof}

\appendix
\section{Appendix}

We first state a technical result whose proof is very close to the first part of the proof of Lemma \ref{Lconv}. Again, we borrow the standard notations $\mathcal{A}_{loc}^+$ and $\mathcal{G}_{loc}^2(\mu^X)$ from \cite[Section I.3.a., Section II.1.d.]{jacod2013limit}.

\begin{Lemma}\label{purely _discontinuous}
Let $F\in\C^{0,1}(\Theta)$ be such that $\nabla_\xr F$ is locally bounded and let $X$ be a càdlàg process such that $\sum_{s\leq T} \lvert\Delta X_s\rvert^2<+\infty$ a.s. Then, 
\begin{align*}
&V:=\lvert(F_s(X\oplus_s x)-F_s(X^-))\mathbb{1}_{\left\lbrace \lvert{x}\rvert\leq{1}\right\rbrace }\rvert^{2} \ast \mu^{X} \in \mathcal{A}_{loc}^+\\
&W:=\lvert{x}\nabla_\xr F_{s}(X^-)\mathbb{1}_{\left\lbrace \lvert{x}\rvert\leq{1}\right\rbrace }\rvert^{2} \ast \mu^{X} \in \mathcal{A}_{loc}^+
\end{align*}
In particular,  $((F_s(X\oplus_s x)-F_s(X^-))\mathbb{1}_{\left\lbrace \lvert{x}\rvert\leq{1}\right\rbrace }) \ast (\mu^{X}-\nu^X)$  and $(x\nabla_\xr F_{s}(X^-)\mathbb{1}_{\left\lbrace \lvert{x}\rvert\leq{1}\right\rbrace }) \ast (\mu^{X}-\nu^X)$ are well-defined purely discontinuous local martingales. 
\end{Lemma}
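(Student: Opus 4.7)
The plan is to reduce both $V$ and $W$ to the a.s.-finite quantity $\sum_{s\leq T}|\Delta X_s|^2\mathbb{1}_{\{|\Delta X_s|\leq 1\}}$ by means of a uniform first-order Taylor bound on $F_s(X^-\oplus_s x)-F_s(X^-)$, and then deduce the purely discontinuous martingale assertion from standard criteria on compensated integrals against $\mu^X$.

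First, I will exploit the $\C^{0,1}$ regularity of $F$ to write, for every $s$, $\xr$ and $x$,
$$F_s(\xr\oplus_s x) - F_s(\xr) = \int_0^1 x\cdot\nabla_\xr F_s(\xr\oplus_s\lambda x)\,d\lambda,$$
by the fundamental theorem of calculus applied to the $C^1$ map $y\mapsto F_s(\xr\oplus_s y)$. Specialising to $\xr = X^-$ and $|x|\leq 1$ yields
$$|F_s(X^-\oplus_s x) - F_s(X^-)|^2 \leq |x|^2\sup_{|y|\leq 1}|\nabla_\xr F_s(X^-\oplus_s y)|^2,$$
which is also an obvious upper bound for the integrand of $W$ (without even the supremum).

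Second, I will combine two independent localizations. Since $X$ is càdlàg, the stopping times $T_m:=\inf\{t\leq T : |X_t|\vee|X_{t-}|>m\}\wedge T$ satisfy $T_m\uparrow T$ a.s., and on $[0,T_m]$ the path $X^-_{s\wedge}\oplus_s y$ has sup-norm at most $m+1$ whenever $|y|\leq 1$; local boundedness of $\nabla_\xr F$ then delivers a deterministic constant $K_m$ with $\sup_{|y|\leq 1}|\nabla_\xr F_s(X^-\oplus_s y)|\leq K_m$ on $[0,T_m]$. Independently, the a.s.~finiteness of the sum $\sum_{s\leq T}|\Delta X_s|^2\mathbb{1}_{\{|\Delta X_s|\leq 1\}}$ lets me define $R_p := \inf\{t\leq T : \sum_{s\leq t, |\Delta X_s|\leq 1}|\Delta X_s|^2 > p\}\wedge T$ with $R_p\uparrow T$, along which the truncated sum at time $R_p$ is at most $p+1$. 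Setting $L_{m,p}:=T_m\wedge R_p$, the Taylor bound gives
$$V_{L_{m,p}} \leq K_m^2(p+1)\quad\text{and}\quad W_{L_{m,p}} \leq K_m^2(p+1),$$
so letting $m,p\to\infty$ exhibits a reducing sequence showing $V,W\in\Ac_{loc}^+$.

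For the final assertion, $\Ac_{loc}^+$ membership is precisely what is required by \cite[Lemma~2.4]{bandini2018special} to place the two integrands in $\Gc_{loc}^2(\mu^X)$, and \cite[Theorem~11.21]{he2019semimartingale} then ensures that the compensated integrals against $\mu^X-\nu^X$ are well-defined purely discontinuous local martingales. The only mildly delicate point is the interaction of the two localizations: $R_p$ must be defined from the $\{|\Delta X_s|\leq 1\}$-truncated sum, so that the jump at $R_p$ itself contributes at most $1$; and the bound on $\nabla_\xr F$ must allow for a vertical perturbation of size $1$, which is why one has to control $\nabla_\xr F$ on the ball of radius $m+1$ rather than $m$.
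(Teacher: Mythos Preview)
Your proof is correct and follows essentially the same route as the paper: both arguments rest on the Taylor bound $F_s(X^-\oplus_s x)-F_s(X^-)=\int_0^1 x\cdot\nabla_\xr F_s(X^-\oplus_s\lambda x)\,d\lambda$, a localization controlling simultaneously $\|X^-\|$ and $\sum_{s\le\cdot}|\Delta X_s|^2$, and then the pair of references \cite[Lemma~2.4]{bandini2018special} and \cite[Theorem~11.21]{he2019semimartingale} for the passage to $\Gc^2_{loc}(\mu^X)$ and the purely discontinuous local martingale property. The only cosmetic difference is that the paper localizes the joint c\`agl\`ad process $(X_{t-},\sum_{s<t}|\Delta X_s|^2)$ via a single sequence $(S_m)$ and then shows $\E[V_{t\wedge S_m}]<\infty$, whereas you use two separate sequences $T_m$ and $R_p$ and obtain an almost-sure bound on $V_{T_m\wedge R_p}$; both yield $\Ac^+_{loc}$.
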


\begin{proof}
The fact that both processes are increasing is trivial. We next argue as in the proof of Lemma \ref{Lconv}.
Set $Y:=(X_{t-},\sum_{s<t} \lvert\Delta X_s\rvert ^2)_{t\geq 0}$ and let $(S_m)_{m\in\N}$ be a localization sequence such that $(Y_{\cdot\wedge S^m}\mathbb{1}_{S^m>0})_{m\in\N}$ is a sequence of bounded processes. We fix $S=S_m$ for some $m$ and $C$ s.t. $\lvert{Y_{t\wedge S}}\rvert\leq{C}~~\forall t\leq T $ a.s. 

Then, 
\begin{align*}
\mathbb{E}\left[   W_{t\wedge{S}}\right] &=\mathbb{E}\left[   \int_{\left] 0,t\wedge S\right] \times\mathbb{R}}\lvert{x}\nabla_\xr F_{s}(X^-)\mathbb{1}_{\left\lbrace \lvert{x}\rvert\leq{1}\right\rbrace }\rvert^{2}   \mu^{X}(ds,dx)\right]\\
&\leq\sup_{s\in\left[ 0,T\right],~\lvert\!\lvert\xr\rvert\!\rvert\leq{C}}|\nabla_\xr F_s(\xr)|^2\;\mathbb{E}\left[\sum_{\substack{s\in\left] 0,t\wedge S\right[\\0<\lvert \Delta{X_{s}}\rvert\leq{1}}} \lvert\Delta{X_{s}}\rvert^2\mathbb{1}_{S>0}+\lvert\Delta{X_{S}}\rvert^2\mathbb{1}_{S>0}\mathbb{1}_{\lvert\Delta{X}_S\rvert\leq{1}}\right]\\
&\leq\sup_{s\in\left[ 0,T\right],~\lvert\!\lvert\xr\rvert\!\rvert\leq{C}}|\nabla_\xr F_s(\xr)|^2\;(C+1)
. 
\end{align*} 
The last term is finite since $\nabla_\xr F$ is locally  bounded.
Similarly, 
\begin{align*}
\mathbb{E}\left[   V_{t\wedge{S}}\right] &=\mathbb{E}\left[   \int_{\left] 0,t\wedge S\right] \times\mathbb{R}^d}\lvert(F_s(X\oplus_s x)-F_s(X^-))\mathbb{1}_{\left\lbrace \lvert{x}\rvert\leq{1}\right\rbrace }\rvert^{2}  \mu^{X}(ds,dx)\right]\\
&=\mathbb{E}\left[\sum_{\substack{s\in\left]0,t\wedge S\right]\\0<\lvert \Delta{X_{s}}\rvert\leq{1}}} \lvert(F_{s}(X)-F_{s}(X^-)\rvert^2\right]\\
&=\mathbb{E}\left[\sum_{\substack{s\in\left]0,t\wedge S\right]\\0<\lvert \Delta{X_{s}}\rvert\leq{1}}}  \lvert\Delta{X_{s}}\rvert^2\;\lvert\int_0^1\nabla_x{F}_{s}(X^-\oplus_{s}\lambda\Delta X_s)d\lambda\rvert^2\right]\\
&\leq\sup_{s\in\left[ 0,T\right],~\lvert\!\lvert{\xr}\rvert\!\rvert\leq{M}}{|\nabla_\xr F_s(\xr)|}^2 \;\mathbb{E}\left[\sum_{\substack{s\in\left]0,t\wedge S\right[\\0<\lvert \Delta{X_{s}}\rvert\leq{1}}}  \lvert\Delta{X_{s}}\rvert^2\mathbb{1}_{S>0}+\lvert\Delta{X_{S}}\rvert^2\mathbb{1}_{S>0}\mathbb{1}_{\lvert\Delta{X}_S\rvert\leq{1}}\right]\\
&\leq\sup_{s\in\left[ 0,T\right],~\lvert\!\lvert{\xr}\rvert\!\rvert\leq{C}}|\nabla_\xr F_s(\xr)|^2\; (C+1) .
\end{align*} 

Thus, $V$ and $W$ belong to $\mathcal{A}^+_{loc}$.

We conclude that $((F_s(X\oplus_s x)-F_s(X^-))\mathbb{1}_{\left\lbrace \lvert{x}\rvert\leq{1}\right\rbrace }) \ast (\mu^{X}-\nu^X)$  and $(x\nabla_\xr F_{s}(X^-)\mathbb{1}_{\left\lbrace \lvert{x}\rvert\leq{1}\right\rbrace }) \ast (\mu^{X}-\nu^X)$ are well-defined square integrable purely discontinuous locale martingales by \cite[Theorem~11.21]{he2019semimartingale} since their integrands belong to $\mathcal{G}^2_{loc}(\mu^X)$ by  \cite[Lemma~2.4]{bandini2018special}.\\
\end{proof}

The next result follows from the same arguments as in   \cite{bandini2017weak,bouchard2021ac}. {At the difference of Theorem \ref{TIto}, it does not assert that $\Gamma^{F}$ is predictable.} We provide its proof for completeness. 

\begin{Proposition}\label{ItoC01standard}
Let $X=X_0+M+A$ be a càdlàg weak Dirichlet process with finite quadratic variation. Let $\mu^X$ be its jump measure and $\nu^X$ its compensator. 
 
 Let $F:\Theta\to\mathbb{R}$ be $\mathbb{C}^{0,1}$, such that $F$ and $\nabla_{x}F$ are both in $\mathbb{C}_{loc}^{u,b}(\Theta)$, and such that $s\mapsto\nabla_\xr F_{s}(X^-)$ admits right-limits a.s. Then, $(F_{t}(X))_{t\in{\left[ 0,T \right] }}$ is a weak Dirichlet process with decomposition
\begin{align*}
F_{t}(X)=F_{0}(X)+&\int_0^t\nabla_\xr F_{s}(X^-) dM_s\notag\\
&+\int_{\left] 0,t\right] \times\mathbb{R}^d}(F_s(X^-\oplus_s x)-F_s(X^-)-x\nabla_\xr F_{s}(X^-))\mathbb{1}_{\left\lbrace \lvert{x}\rvert>{1}\right\rbrace }\mu^X(ds,dx)\notag\\
&+\int_{\left] 0,t\right] \times\mathbb{R}^d}(F_s(X^-\oplus_s x)-F_s(X^-))\mathbb{1}_{\left\lbrace \lvert{x}\rvert\leq{1}\right\rbrace }(\mu^X-\nu^X)(ds,dx)\notag\\
&-\int_{\left] 0,t\right] \times\mathbb{R}^d}x\nabla_\xr F_{s}(X^-)\mathbb{1}_{\left\lbrace \lvert{x}\rvert\leq{1}\right\rbrace }(\mu^X-\nu^X)(ds,dx)\notag\\
&+\Gamma_t^F,~~\forall t\in\left[ 0,T\right],
\end{align*}
where $\Gamma^F$ is an orthogonal process, if and only if $(F,X)$ satisfies Assumption $(A)$.
\end{Proposition}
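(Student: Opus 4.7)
The plan is to define the putative remainder
\begin{equation*}
\Gamma^F_t := F_t(X) - F_0(X) - \int_0^t \nabla_\xr F_s(X^-)\, dM_s - J_t,
\end{equation*}
where $J_t$ denotes the sum of the three jump integrals appearing in the statement, and to show that $\Gamma^F$ is orthogonal if and only if $(F,X)$ satisfies Assumption $(A)$. First I would verify that every term on the right is well defined: the stochastic integral against $M$ makes sense because $\nabla_\xr F(X^-)$ is locally bounded (by $\nabla_\xr F\in\C_{loc}^{u,b}(\Theta)$ and c\`adl\`agness of $X$); the large-jump Stieltjes sum is a.s.~finite since $[X]_T<\infty$ forces only finitely many jumps of size larger than one; and Lemma \ref{purely _discontinuous} shows that the two compensated small-jump integrals against $\mu^X-\nu^X$ are purely discontinuous local martingales. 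Hence $\Gamma^F$ is a well-defined c\`adl\`ag adapted process.

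Next I would reduce orthogonality to an identity for $[F(X),N]$. For any continuous local martingale $N$, one has $[\int\nabla_\xr F(X^-)\,dM,N]=\int \nabla_\xr F(X^-)\,d[M^c,N]$ and $[J,N]=0$, the latter because the large-jump integral is a finite-activity pure jump process while the two compensated small-jump integrals are purely discontinuous local martingales (see Remark \ref{rem: weak dirichlet}). Therefore $\Gamma^F$ is orthogonal if and only if
\begin{equation*}
[F(X),N]_t \;=\; \int_0^t \nabla_\xr F_s(X^-)\, d[M^c,N]_s \quad \text{for every continuous local martingale } N.
\end{equation*}
To analyse $[F(X),N]$, I would appeal to the Russo-Vallois regularization and split
\begin{equation*}
F_{s+\eps}(X)-F_s(X) \;=\; T^{sh}_{s,\eps}+T^v_{s,\eps}+T^h_{s,\eps},
\end{equation*}
where $T^{sh}_{s,\eps}=F_{s+\eps}(X)-F_{s+\eps}(X_{s\wedge}\boxplus_{s+\eps}X_{s+\eps})$ (path-shape), $T^v_{s,\eps}=F_{s+\eps}(X_{s\wedge}\oplus_{s+\eps}(X_{s+\eps}-X_s))-F_{s+\eps}(X_{s\wedge})$ (vertical; recall $X_{s\wedge}\boxplus_{s+\eps}X_{s+\eps}=X_{s\wedge}\oplus_{s+\eps}(X_{s+\eps}-X_s)$), and $T^h_{s,\eps}=F_{s+\eps}(X_{s\wedge})-F_s(X_{s\wedge})$ (horizontal, using that $F$ is non-anticipative).

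Each of these three contributions is then analysed separately. The vertical piece, via the first-order expansion $T^v_{s,\eps}=(X_{s+\eps}-X_s)\int_0^1\nabla_\xr F_{s+\eps}(X_{s\wedge}\oplus_{s+\eps}\lambda(X_{s+\eps}-X_s))\,d\lambda$ combined with continuity of $\nabla_\xr F$, the assumed right-limits of $\nabla_\xr F(X^-)$ and standard convergence results for regularized covariations, contributes $\int_0^\cdot \nabla_\xr F_s(X^-)\, d[X,N]_s=\int_0^\cdot \nabla_\xr F_s(X^-)\, d[M^c,N]_s$ in the u.c.p.~limit. The path-shape piece, after division by $\eps$ and pairing with $N_{s+\eps}-N_s$, is exactly the quantity appearing in \eqref{iff_Ito}, so it vanishes in the u.c.p.~limit if and only if $(F,X)$ satisfies Assumption $(A)$. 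I expect the horizontal piece $T^h_{s,\eps}$ to be the main obstacle, because $F$ is not assumed to admit any horizontal derivative; my plan is to apply a stochastic Fubini to rewrite $\frac1\eps\int_0^t T^h_{s,\eps}(N_{s+\eps}-N_s)\,ds$ as a stochastic integral $\int H^\eps_u\,dN_u$ whose integrand $H^\eps_u$ is pointwise bounded by $\delta_K(F,\eps)$ on a localizing event $\{\|X\|\le K\}$, and then invoke Doob's $L^2$-inequality to obtain a uniform bound of order $\delta_K(F,\eps)^2\,\E[[N]_T]\to 0$. Assembling the three contributions gives the required equivalence.
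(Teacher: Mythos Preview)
Your architecture is exactly the paper's: define $\Gamma^F$ as the remainder, reduce orthogonality to $[F(X),N]=\int\nabla_\xr F(X^-)\,d[M,N]$, compute the covariation via regularization, and split the increment into the same three pieces (your $T^{sh}$, $T^v$, $T^h$ are the paper's $I^{\eps,2}$, $I^{\eps,11}+I^{\eps,12}+I^{\eps,13}$, $I^{\eps,14}$). Your treatment of $T^{sh}$ and $T^h$ matches the paper's (stochastic Fubini plus the bound $|T^h_{s,\eps}|\le \delta_{\|X\|}(F,\eps)$ is exactly what is done there).

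The one point where your proposal is too optimistic is the vertical piece. You write that ``continuity of $\nabla_\xr F$ \ldots\ and standard convergence results for regularized covariations'' suffice, and you flag $T^h$ as the main obstacle. In fact $T^h$ is the easy part; the genuine difficulty in the c\`adl\`ag setting is $T^v$. After your first-order expansion, the integrand is $\nabla_\xr F_{s+\eps}(X_{s\wedge}\oplus_{s+\eps}\lambda(X_{s+\eps}-X_s))$, and to replace it by $\nabla_\xr F_s(X)$ you need $|X_{s+\eps}-X_s|$ to be small, which fails uniformly in $s$ near jump times of $X$. The paper therefore splits $T^v$ further: a ``clean'' term $\nabla_\xr F_s(X)(X_{s+\eps}-X_s)(N_{s+\eps}-N_s)$ handled by \cite[Proposition~A.6]{bandini2017weak} (this is where the right-limit hypothesis on $\nabla_\xr F(X^-)$ is used), a term controlled by $\delta_{\|X\|}(\nabla_\xr F,\eps)$ and Cauchy--Schwarz, and a Taylor remainder term. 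For this last term the paper fixes $\gamma>0$, separates finitely many jump times $t_1,\dots,t_K$ accounting for all but $\gamma^2$ of $\sum|\Delta X_s|^2$, and treats the $\eps$-neighborhoods of these times using uniform continuity of $N$ while on the complement the increment of $X$ is at most $3\gamma$. Your plan is correct, but this jump-splitting step is the missing ingredient you will need to make the $T^v$ argument go through.
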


\begin{proof} 
For the rest of the proof, we denote by  $\delta_\cdot(F,\cdot)$ and $\delta_\cdot(\nabla_{\xr}F,\cdot)$ the modulus of continuity of $F$ and $\nabla_{\xr}F$, see  \eqref{def_module_continuit}.

Let $N$ be a continuous local martingale. Our aim is to show that $\left[ \Gamma^F,N\right]\equiv0$, in which 
\begin{align}\label{Gamma}
\Gamma_t^F=F_{t}(X)-F_{0}(X)&-\int_0^t\nabla_\xr F_{s}(X^-)dM_s\notag\\
&-\int_{\left] 0,t\right] \times\mathbb{R}}(F_s(X\oplus_s x)-F_s(X^-)-x\nabla_\xr F_{s}(X^-))\mathbb{1}_{\left\lbrace \lvert{x}\rvert>{1}\right\rbrace }\mu^X(ds,dx)\notag\\
&-\int_{\left] 0,t\right] \times\mathbb{R}}(F_s(X\oplus_s x)-F_s(X^-))\mathbb{1}_{\left\lbrace \lvert{x}\rvert\leq{1}\right\rbrace }(\mu^X-\nu^X)(ds,dx)\notag\\
&+\int_{\left] 0,t\right] \times\mathbb{R}}x\nabla_\xr F_{s}(X^-)\mathbb{1}_{\left\lbrace \lvert{x}\rvert\leq{1}\right\rbrace }(\mu^X-\nu^X)(ds,dx),~~t\leq T.
\end{align}
Note that $\sum_{s\leq T} \lvert\Delta X_s\rvert^2<+\infty$ a.s., since $X$ has finite quadratic variation,  see \cite[Lemma~2.10.]{bandini2017weak}.
Then, by Lemma \ref{purely _discontinuous}  and the definition of a purely discontinuous local martingale, the two last terms of \eqref{Gamma} are orthogonal{,} hence their quadratic {co}variation with $N$ equals $0$. 

On the orher hand,  since $X$ is a càdlàg process, it has finitely many jumps larger or equal to $1$, a.s. \\ Hence $\int_{\left] 0,\cdot\right] \times\mathbb{R^d}}(F_s(X\oplus_s x)-F_s(X^-)-x\nabla_\xr F_{s}(X^-))\mathbb{1}_{\left\lbrace \lvert{x}\rvert>{1}\right\rbrace }\mu^X(ds,dx)$ is a bounded variation process and, by Remark  \ref{consistence_bracket}, its quadratic covariation with $N$ also equals $0$. Moreover, by Remark \ref{consistence_bracket},
\begin{equation*}
\left[ \int_0^{\cdot}\nabla_\xr F_{s}(X^-) dM_s,N\right]  _t=\int_0^t\nabla_\xr F_{s}(X^-) d\left[ M,N\right]  _s.
\end{equation*}
Thus,  by bilinearity of the quadratic covariation, we only have to show that
\begin{equation*}
\left[ F_\cdot(X),N\right]_t=\int_0^t\nabla_\xr F_{s}(X^-) d\left[ M,N\right]  _s
\end{equation*}
which by continuity of $N$ and    \cite[Proposition A.3]{bandini2017weak} is equivalent to 
\begin{equation*}
I^{\epsilon}_t:=\dfrac{1}{\epsilon}\int_0^t(F_{s+\epsilon}(X)-F_s(X))(N_{s+\epsilon}-N_s)ds\underset{\epsilon\rightarrow{0}}{\longrightarrow}
\int_0^t\nabla_\xr F_{s}(X^-) d\left[ M,N\right]  _s ~~\mbox{u.c.p.}
\end{equation*}
We have 
\begin{equation*}
I^{\epsilon}_t=I^{\epsilon,1}_t+I^{\epsilon,2}_t
\end{equation*}
where
\begin{align*}
&I^{\epsilon,1}_t=\dfrac{1}{\epsilon}\int_0^t(F_{s+\epsilon}(X_{s\wedge}\boxplus_{s+\epsilon}X_{s+\epsilon})-F_s(X))(N_{s+\epsilon}-N_s)ds\\
&I^{\epsilon,2}_t=\dfrac{1}{\epsilon}\int_0^t(F_{s+\epsilon}(X)-F_{s+\epsilon}(X_{s\wedge}\boxplus_{s+\epsilon}X_{s+\epsilon}))(N_{s+\epsilon}-N_s)ds{.}
\end{align*}
If we show that $I^{\epsilon,1}\underset{\epsilon\rightarrow{0}}{\longrightarrow}\int_0^\cdot\nabla_\xr F_{s}(X^-) d\left[ M,N\right]  _s$ u.c.p., then $I^{\epsilon}\underset{\epsilon\rightarrow{0}}{\longrightarrow}0$ u.c.p.~if and only if $I^{\epsilon,2}\underset{\epsilon\rightarrow{0}}{\longrightarrow}0$ u.c.p., which would provide the required result.

Let us decompose $I^{\epsilon,1}$ in
\begin{equation*}
I^{\epsilon,1}_t=I^{\epsilon,11}_t+I^{\epsilon,12}_t+I^{\epsilon,13}_t+I^{\epsilon,14}_t
\end{equation*}
where 
\begin{align*}
&I^{\epsilon,11}_t=\dfrac{1}{\epsilon}\int_0^t\int_0^1(\nabla_\xr F_{s+\epsilon}(X_{s\wedge}\oplus_{s+\epsilon}\lambda(X_{s+\epsilon}-X_s))-\nabla_\xr F_{s+\epsilon}(X_{s\wedge}))d\lambda~(X_{s+\epsilon}-X_s)(N_{s+\epsilon}-N_s)ds\\
&I^{\epsilon,12}_t=\dfrac{1}{\epsilon}\int_0^t(\nabla_\xr F_{s+\epsilon}(X_{s\wedge})-\nabla_\xr F_{s}(X))(X_{s+\epsilon}-X_s)(N_{s+\epsilon}-N_s)ds\\
&I^{\epsilon,13}_t=\dfrac{1}{\epsilon}\int_0^t\nabla_\xr F_{s}(X)(X_{s+\epsilon}-X_s)(N_{s+\epsilon}-N_s)ds\\
&I^{\epsilon,14}_t=\dfrac{1}{\epsilon}\int_0^t(F_{s+\epsilon}(X_{s\wedge})-F_s(X))(N_{s+\epsilon}-N_s)ds.
\end{align*} 
Since $\nabla_\xr F$ is in $\Cb_{loc}^{u,b}(\Theta)$, we have
\begin{equation*}
\lvert{I^{\epsilon,12}_t}\rvert\leq\delta_{\lvert\!\lvert{X}\rvert\!\rvert}(\nabla_\xr F,\epsilon)\sqrt{\left[ N,N\right]^{ucp}_{\epsilon,t}\;\left[ X,X\right]^{ucp}_{\epsilon,t}}.
\end{equation*}
Since $N$ is a local martingale, $N$ has finite quadratic variation by Remark \ref{consistence_bracket}. Then, $\left[ N,N\right]^{ucp}_{\epsilon}\underset{\epsilon\rightarrow{0}}{\longrightarrow}\left[ N,N\right]$ and $\left[ X,X\right]^{ucp}_{\epsilon}\underset{\epsilon\rightarrow{0}}{\longrightarrow}\left[ X,X\right]$ u.c.p. Hence, the right-hand side term converges to $0$ u.c.p.~and thus $I^{\epsilon,12}\underset{\epsilon\rightarrow{0}}{\longrightarrow}0$  u.c.p.

Let us now consider $I^{\epsilon,14}$:
\begin{align*}
I^{\epsilon,14}_t=&\dfrac{1}{\epsilon}\int_0^t(F_{s+\epsilon}(X_{s\wedge})-F_s(X))(N_{s+\epsilon}-N_s)ds\\
&=\dfrac{1}{\epsilon}\int_0^t(F_{s+\epsilon}(X_{s\wedge})-F_s(X))\int_s^{s+\epsilon}dN_u~ds\\
&=\dfrac{1}{\epsilon}\int_0^{t+\epsilon}\int_{(u-\epsilon)\vee0}^{u}F_{s+\epsilon}(X_{s\wedge})-F_s(X)ds~dN_u 
\end{align*}
where we used the stochastic Fubini's Lemma to deduce the last equation. 
Since  $F\in{\Cb}_{loc}^{u,b}(\Theta)$,  we have $\frac{1}{\epsilon}\lvert\int_{(u-\epsilon)\vee0}^{u}F_{s+\epsilon}(X_{s\wedge})-F_s(X)ds\rvert\leq\delta_{\lvert\!\lvert{X}\rvert\!\rvert}(F,\epsilon)\underset{\epsilon\rightarrow{0}}{\longrightarrow}0~~\forall{u}\in\left[ 0,T\right]$ a.s. 
Then, using \cite[Theorem~I.4.31]{jacod2013limit},  we conclude that $I^{\epsilon,14}\underset{\epsilon\rightarrow{0}}{\longrightarrow}0$ u.c.p.
By using \cite[Proposition~A.6.]{bandini2017weak},  we have  $I^{\epsilon,13}\underset{\epsilon\rightarrow{0}}{\longrightarrow}\int_0^\cdot\nabla_\xr F_{s}(X^-) d\left[ M,N\right]  _s$ u.c.p. Hence, it remains to show that   $I^{\epsilon,11}\underset{\epsilon\rightarrow{0}}{\longrightarrow}0$ u.c.p.

Let $(\epsilon_n)_{n\in\N}$ a sequence of real numbers which tends to $0$ and let $\mathcal{N}$ be an element of $\mathcal{F}$ $s.t.$ ${\mathbb P}(\mathcal{N}^c)=0$ and $s.t.$  $\left[ N,N\right]^{ucp}_{\epsilon_n}\underset{n\rightarrow{+\infty}}{\longrightarrow}\left[ N,N\right]$ and $\left[ X,X\right]^{ucp}_{\epsilon_n}\underset{n\rightarrow{+\infty}}{\longrightarrow}\left[ X,X\right]$ uniformly on $\mathcal{N}.$  We fix $\omega\in\mathcal{N}$ for the rest of the proof (we omit it to alleviate notations).\\
Fix an arbitrary $\gamma>{0}$ and let $(t_i)_{i\in\N}$ be the jump times of $X$ (depending on this fixed $\omega$). \\
By \cite[Lemma~2.10.]{bandini2017weak}, there exists $K=K(\omega)$ $s.t.$ $\sum_{i=K+1}^\infty\lvert\Delta{X_{t_i}}\rvert^2\leq\gamma^2$.\\
We define $A_{\epsilon_n}=\bigcup\limits_{i=1}^{K}\left] t_i-\epsilon_n,t_i\right] $ and $B_{\epsilon_n}=\left[ 0,T\right] \backslash{A_{\epsilon_n}}$ and decompose $I^{\epsilon_n,11}$ as follows:
\begin{equation*}
I^{\epsilon_n,11}=I^{\epsilon_n,11A}+I^{\epsilon_n,11B}
\end{equation*}
where
\begin{align*}
I^{\epsilon_n,11A}_t&=\sum_{i=1}^K\dfrac{1}{\epsilon_n}\int_{t_i-\epsilon_n}^{t_i}\mathbb{1}_{s\in\left] 0,t\right]}\int_0^1 G_{\epsilon_{n}}(s,\lambda)d\lambda~(X_{s+\epsilon_n}-X_s)(N_{s+\epsilon_n}-N_s)ds\\
I^{\epsilon_n,11B}_t&=\dfrac{1}{\epsilon_n}\int_0^t\mathbb{1}_{s\in{B}_{\epsilon_n}}\int_0^1G_{\epsilon_{n}}(s,\lambda)d\lambda~(X_{s+\epsilon_n}-X_s)(N_{s+\epsilon_n}-N_s)ds
\end{align*}
in which 
$$
G_{\epsilon_{n}}(s,\lambda):=\nabla_\xr F_{s+\epsilon_n}(X_{s\wedge}\oplus_{s+\epsilon_n}\lambda(X_{s+\epsilon_n}-X_s))-\nabla_\xr F_{s+\epsilon_n}(X_{s\wedge}).
$$
We have
\begin{align*}
\lvert{I^{\epsilon_n,11B}_t}\rvert&\leq\delta_{\lvert\!\lvert{X}\rvert\!\rvert}(\nabla_\xr F,\sup_{i~{ \rm s.t. }~ t_{i}\le T}~\sup_{r,a\in\left[ t_i,t_{i+1}\right[,~\lvert{r-a}\rvert\leq\epsilon_n}\lvert{X_r-X_a}\rvert)\sqrt{\left[ N,N\right]^{ucp}_{\epsilon_n,t}\;\left[ X,X\right]^{ucp}_{\epsilon_n,t}}\\
&\leq\delta_{\lvert\!\lvert{X}\rvert\!\rvert}(\nabla_\xr F,3\gamma)\sqrt{\left[ N,N\right]^{ucp}_{\epsilon_n,t}\;\left[ X,X\right]^{ucp}_{\epsilon_n,t}}
\end{align*}
for $n$ large enough (depending on $\omega$), by \cite[Lemma~2.12.]{bandini2017weak} applied successively on the intevals $\left[ t_i,t_{i+1}\right]$ to the processes $X_{t_i}\boxplus_{t_i+1}X_{t_{i+1}-}$ for $i=0,\ldots, K-1$ and on $\left[0,t_0\right]$ and $\left[t_K,T\right]$. Then,
\begin{equation*}
\limsup_{n\rightarrow\infty}\sup_{t\in\left[ 0,T\right] }\lvert{I^{\epsilon_n,11B}_t}\rvert\leq\delta_{\lvert\!\lvert{X}\rvert\!\rvert}(\nabla_\xr F,3\gamma)\sqrt{\left[ N,N\right]_T\left[ X,X\right]_T}.
\end{equation*}
On the other hand, since $N$ is continuous and hence uniformly continuous on $\left[ 0,T\right])$,     $\lvert{N_{s+\epsilon_n}-N_s}\rvert\leq\gamma$ $\forall{s}\in\left[ 0,T\right]$,  for $n$ large enough. Then
\begin{align*}
\sup_{t\in\left[ 0,T\right] }\lvert{I^{\epsilon_n,11A}_t}\rvert&\leq\sum_{i=1}^K\dfrac{1}{\epsilon_n}\int_{t_i-\epsilon_n}^{t_i}\int_0^1\rvert G_{\epsilon_{n}}(s,\lambda)\rvert{d\lambda}~\lvert{X_{s+\epsilon_n}-X_s}\rvert \lvert{N_{s+\epsilon_n}-N_s}\rvert{ds}\\
&\leq\gamma\times{K}\times{2}\lvert\!\lvert{X}\rvert\!\rvert\times{2}\sup_{s\in\left[ 0,T\right],~\lvert\!\lvert\xr\rvert\!\rvert\leq{\lvert\!\lvert{X}\rvert\!\rvert}}\nabla_\xr F_s(\xr).
\end{align*}
Hence, 
\begin{equation*}
\limsup_{n\rightarrow\infty}\sup_{t\in\left[ 0,T\right] }\lvert{I^{\epsilon_n ,11}_t}\rvert\leq\delta_{\lvert\!\lvert{X}\rvert\!\rvert}(\nabla_\xr F,3\gamma)\sqrt{\left[ N,N\right]_T\left[ X,X\right]_T}+4\gamma{K}\lvert\!\lvert{X}\rvert\!\rvert\sup_{s\in\left[ 0,T\right],~\lvert\!\lvert\xr\rvert\!\rvert\leq{\lvert\!\lvert{X}\rvert\!\rvert}}\nabla_\xr F_s(\xr)
\end{equation*}
which allows us to conclude that $I^{\epsilon_n,11}\underset{n\rightarrow{+\infty}}{\longrightarrow}0$ by 
arbitrariness of $\gamma>0$.  \\Since $\mathbb{P}(\mathcal{N})=1$,  we get $I^{\epsilon_n,11}\underset{n\rightarrow{+\infty}}{\longrightarrow}0$ uniformly a.s.~and thus the convergence holds u.c.p. Since it is true for all sequence $(\epsilon_n)_{n\in\N}$ that converges to 0, then $I^{\epsilon,11}\underset{\epsilon\rightarrow{0}}{\longrightarrow}0$ u.c.p., which concludes the proof.
\end{proof}

\nocite{*}
\bibliographystyle{plain}
%\bibliography{bibliography_Ito}

\end{document}